\DeclarePairedDelimiter{\abs}{\lvert}{\rvert}
\newcommand{\card}[1]{\abs{#1}}
\newcommand{\cglfuz}{\mathrel\fltrivb}
\newcommand{\cgup}{\mathord\uparrow}
\newcommand{\cgdown}{\mathord\downarrow}
\newcommand{\cgstar}{\mathord{\ast}}
\newcommand{\nimber}[1]{\ensuremath{\cgstar#1}}
\newcommand{\dos}{\mathbin{::}}
\newcommand{\cgslash}{\mathrel\vert}
\newcommand{\game}[2]{\{ #1 \mathrel\vert #2 \}}
\newcommand{\ruleset}[1]{\textup{\textsc{#1}}\xspace}
\newcommand{\ArcKayles}{\ruleset{Arc Kayles}}
\newcommand{\Col}{\ruleset{Col}}
\newcommand{\Domineering}{\ruleset{Domineering}}
\newcommand{\GenKonane}{\ruleset{Generalised Konane}}
\newcommand{\Hackenbush}{\ruleset{Hackenbush}}
\newcommand{\Konane}{\ruleset{Konane}}
\newcommand{\Nim}{\ruleset{Nim}}
\newcommand{\NodeKayles}{\ruleset{Node Kayles}}
\newcommand{\NoGo}{\ruleset{NoGo}}
\newcommand{\Poset}{\ruleset{Poset Games}}
\newcommand{\Snort}{\ruleset{Snort}}
\newcommand{\TT}{\ruleset{Teetering Towers}}
\newcommand{\DiPlace}{\ruleset{Digraph placement}}
\newtheorem{theorem}{Theorem}[section]
\newtheorem{conjecture}[theorem]{Conjecture}
\newtheorem{corollary}[theorem]{Corollary}
\newtheorem{lemma}[theorem]{Lemma}
\newtheorem{problem}[theorem]{Problem}
\newtheorem{question}[theorem]{Question}
\title{Digraph Placement Games}
\author{Alexander Clow\\
\small Department of Mathematics\\[-0.8ex]
\small Simon Fraser University\\[-0.8ex] 
\small Burnaby, British Columbia, Canada\\
\small\tt alexander\_clow@sfu.ca\\
\and
Neil Anderson McKay\\
\small Department of Mathematics and Statistics\\[-0.8ex]
\small University of New Brunswick\\[-0.8ex]
\small Saint John, New Brunswick, Canada\\
\small\tt neil.mckay@unb.ca\\}
\date{}
\begin{document}

\maketitle

\begin{abstract}
This paper considers a natural ruleset for playing a partisan combinatorial game on a directed graph, which we call \DiPlace. Given a digraph $G$ with a not necessarily proper $2$-coloring of $V(G)$, the \DiPlace game played on $G$ by the players Left and Right, who play alternately, is defined as follows. On her turn, Left chooses a blue vertex which is deleted along with all of its out-neighbours. On his turn Right chooses a red vertex, which is deleted along with all of its out-neighbours. A player loses if on their turn they cannot move. 
We show constructively that \DiPlace is a universal partisan ruleset; for all partisan combinatorial games $X$ there exists a \DiPlace game, $G$, such that $G = X$.
\DiPlace and many other games including \Nim, \Poset, \Col, \NodeKayles, \Domineering, and \ArcKayles are instances of a class of placement games that we call conflict placement games.
We prove that $X$ is a conflict placement game if and only if it has the same literal form as a \DiPlace game.
A corollary of this is that deciding the winner of a \DiPlace game is PSPACE-hard.
Next, for a game value $X$ we prove bounds on the order of a smallest \DiPlace game $G$ such that $G = X$.
\end{abstract}

\section{Introduction}

Let $G = (V,E)$ a directed graph and $\phi\colon V \to \{\text{blue},\text{red}\}$ a $2$-coloring of the vertices of $G$. Note that $\phi$ need not be a proper coloring. The \DiPlace game played on $G$, denoted simply by $G$ when the fact that we are playing a \DiPlace game is clear from context, is described as follows: Suppose there are two players, Left (blue) and Right (red). On her turn Left chooses a blue vertex $v$ of $G$ that has not been deleted, and deletes all vertices in $N^+[v]$. On his turn Right chooses a red vertex $u$ of $G$ that has not been deleted, and deletes all vertices in $N^+[u]$. If at the beginning of her turn, all blue vertices have been deleted, then Left loses. Similarly, if at the beginning of his turn, all red vertices have been deleted, then Right loses. 
When necessary we denote the \DiPlace game played on digraph $G = (V,E)$ with $2$-coloring $\phi$ by $G=(V,E,\phi)$.
The set of all \DiPlace games is the ruleset \DiPlace. 

Given a \DiPlace game $G$ and a sequence of vertices $u_1,\dots, u_t$ such that $u_i$ is not an out-neighbour of any vertex $u_j$ where $j<i$, we denote the game resulting from deleting vertices $u_1,\dots, u_t$ and their out-neighbourhoods in $G$ by $G/[u_1,\dots, u_t]$. To make for easier to read expressions, we denote $G/[u]$ simply as $G/u$ when only a single vertex and its out-neighbours are removed. We make no assumptions about the colours of removed vertices to allow for non-alternating play.

For readers familiar with the combinatorial game \NodeKayles, observe that \DiPlace can be seen as a directed version of partisan \NodeKayles. For readers unfamiliar, partisan \NodeKayles is played on a $2$-colored simple graph, where players take turns choosing a vertex of their color, call it $u$, then deleting the closed neighbourhood of $u$, $N[u]$, from the graph.

Any \DiPlace game is a two-player game with perfect information,  no chance, and where a player loses if they cannot move on their turn. As a result, every \DiPlace game is an example of a \emph{normal play combinatorial game}. The players in a \DiPlace game necessarily have different options on their move, such games are called \emph{partisan}, which simply implies that players need not have all of the same moves.  See \cref{fig: Small DiPlace game Example} for two examples of \DiPlace games. The first example is disconnected and  equal to a sum of simple combinatorial games, the second is connected and is a equal to the same sum of games.

\begin{figure}
\begin{minipage}[t]{0.49\textwidth}
\begin{center}
\scalebox{0.75}{
  \begin{tikzpicture}
  [node distance={15mm}, thick, main/.style = {draw, circle,}] 
\node[main][fill= blue] (d1) at (1,0) {}; 
\node[rectangle,draw,inner sep=5][fill= red] (d2) at (1,3) {}; 
\node[rectangle,draw,inner sep=5][fill= red] (d3) at (3,0) {};
\node[main][fill= blue] (d4) at (3,1.5) {}; 
\node[main][fill= blue] (d5) at (3,3) {};
\node[rectangle,draw,inner sep=5][fill= red] (d6) at (5,1.5) {}; 

\draw [line width=1.pt] (d4) -- (d5);
\draw [line width=1.pt] (d5) -- (d6);
\draw [line width=1.pt] (d4) -- (d3);
\draw [line width=1.pt] (d3) -- (d6);
\draw [line width=1.pt] (d4) -- (d6);
\draw [line width=1.pt] (d1) -- (d2);
\draw [line width=1.pt] (d3) -- (d2);
\draw [line width=1.pt] (d1) -- (d5);
\draw [line width=1.pt] (d1) -- (d6);
\draw [line width=1.pt] (d2) -- (d6);
\draw [line width=1.pt] (d3) -- (d2);
\draw [line width=1.pt][->] (d1) -- (d4);
\draw [line width=1.pt][->] (d2) -- (d4);
\draw [line width=1.pt][->] (d5) -- (d2);
\draw [line width=1.pt][->] (d3) -- (d1);

\node[main][fill= blue] (1) at (3,4.5) {}; 
\node[main][fill= blue] (2) at (1,4.5) {}; 

\node[main][fill= blue] (*1) at (3,6) {}; 
\node[rectangle,draw,inner sep=5][fill= red] (*2) at (1,6) {}; 
\draw [line width=1.pt] (*1) -- (*2);

\draw [line width=3.pt](-1,-1) -- (6,-1) -- (6,7) -- (-1,7) -- (-1,-1);

    \end{tikzpicture}
}
\end{center}
\end{minipage}
\begin{minipage}[t]{0.49\textwidth}
\begin{center}
\scalebox{0.75}{
\begin{tikzpicture}[node distance={15mm}, thick, main/.style = {draw, circle,}] 

\node[main][fill= blue] (1) at (1,0) {}; 
\node[main][fill= blue] (2) at (4,0) {}; 
\node[rectangle,draw,inner sep=5][fill= red] (3) at (5,3) {};
\node[main][fill= blue] (4) at (0,3) {}; 
\node[rectangle,draw,inner sep=5][fill= red] (5) at (2.5,6) {};

\draw [line width=1.pt] (3) -- (4);
\draw [line width=1.pt][->] (1) -- (3);
\draw [line width=1.pt][->] (1) -- (4);
\draw [line width=1.pt][->] (1) -- (5);
\draw [line width=1.pt][->] (2) -- (3);
\draw [line width=1.pt][->] (2) -- (4);
\draw [line width=1.pt][->] (2) -- (5);
\draw [line width=1.pt][->] (3) -- (5);
\draw [line width=1.pt][->] (4) -- (5);

\draw [line width=3.pt](-1,-1) -- (6,-1) -- (6,7) -- (-1,7) -- (-1,-1);
    \end{tikzpicture}

}
\end{center}
\end{minipage}
\caption{Two \DiPlace games equal to $2  + \cgdown + \cgstar$. Here $2  + \cgdown + \cgstar$ is chosen because it is an examples of simple, yet non-trivial combinatorial game. In this, and all other figures in this paper, an undirected edge $(u,v)$ implies that there exists directed edges $(u,v)$ and $(v,u)$. Blue vertices are drawn as circles and red vertices are drawn as squares.
}
\label{fig: Small DiPlace game Example}
\end{figure}

This paper is intended for a general mathematical audience. We assume familiarity with basic notation from graph theory, such as the in or out neighbourhood of a vertex, and induced subgraphs. We do not assume the audience is familiar with combinatorial game theory. We follow many conventions from combinatorial game theory, see  \cite{albert2019lessons,BerleCG1982,siegel2013combinatorial}.

A \emph{combinatorial game} is a two-player game which involves no randomness or hidden information. Normally the players in a combinatorial game are denoted Left and Right.
For technical reasons, a specific instance of a game is called a game, whereas the set of all games played with the same rules is called a ruleset.
Examples of combinatorial game rulesets include Chess, Checkers, and Go. A combinatorial game is \emph{short} if there exists an integer $k$, such that after $k$ turns the game must end, no matter the choices by either player, and at any point during the game, both players have at most a finite number of moves. This prevents infinite sequences of play or infinite numbers of followers. Games which allow infinite sequences of play or allow a player to have infinitely many moves on their turn are called \emph{long}. A combinatorial game is played under the \emph{normal} play convention if a player loses if and only if they cannot move on their turn. In this paper all the combinatorial games we consider are normal play and short. Hence, if we say that $X$ is a game, suppose that $X$ is short.

Given a game $X$ we let $L(X)$ be the set of games Left can move to if they move first in $X$, and we let $R(X)$ be the set of games Right can move to if they move first. We say a member of $L(X)$ is a \emph{Left option} of $X$ and a member of $R(X)$ is a \emph{Right option} of $X$. Often a game $X$ will be written as $\game{Y_1,\dots, Y_k}{Z_1,\dots,Z_t}$ where $L(X) = \{Y_1,\dots, Y_k\}$ and $R(X) = \{Z_1,\dots, Z_t\}$.

Given a game $X$ written as $\game{Y_1,\dots, Y_k}{Z_1,\dots,Z_t}$ and a game $X'$ written as \newline $\game{Y'_1,\dots, Y'_k}{Z'_1,\dots,Z'_t}$ we say that $X \cong X'$ if and only if $Y_i \cong Y'_i$ and $Z_j \cong Z'_j$ for all $i$ and $j$. The game with no moves from either player, $\game{}{}$, is denoted by $0$. So $0 \cong \game{}{}$.  If $X\cong X'$ then we say that $X$ and $X'$ have the same \emph{literal form}. Notice that literal forms are useful as they allow one to say that two games appearing in different rulesets are literally the same, even when the rulesets are dissimilar 

Games having the same literal form is too strong a notion of equivalence to be useful in many contexts. To that end we define another standard notion of equivalence in games, called equality. Given games $X $ and $Y$, the game $X+Y$ is the game with $L(X+Y) = \{X^L + Y: X^L \in L(X)\} \cup \{X + Y^L: Y^L \in L(Y)\}$  and $R(X+Y) = \{X^R + Y: X^R \in L(X)\} \cup \{X + Y^R: Y^R \in R(Y)\}$. This is often less formally written as
\[
X+Y = \game{X^L+Y, X+Y^L}{X^R+Y,X+Y^R}.
\]
We say that $X+Y$ is the \emph{disjunctive sum} of $X$ and $Y$.
Although it may not be immediately clear, the game $X+Y$ is the game played by each player choosing either $X$ or $Y$, but not both, to make a move in on their turn. As players lose when they have no move this new game played on two smaller games is well defined. 
Given a game $X$ we say the \emph{outcome} of $X$ is Left win if Left will win moving first or second, is Right win if Right will win moving first or second, is next player win if whoever moves next will win, or is previous player win if whoever moves first will lose. 
Now, we say games $Y$ and $Z$ are \emph{equal}, written as $Y=Z$, if and only if for all games $X$, the outcome of $X+Y$ is the same as the outcome of $X+Z$.

Observe that if $G$ is a \DiPlace game, whose underlying graph is disconnected, then $G$ is a disjunctive sum of its components. 
Also, notice that even if $G$ is a connected \DiPlace game, then $G$ can easily become disconnected through players deleting vertices.
Hence, understanding disjunctive sums, and therefore values, is critical to understanding \DiPlace games.

We say the class of games equal to $X$, is the \emph{value of $X$}. For each game $X$, the games with value $X$ have a unique simplest representative, which is called the \emph{canonical form of $X$}. For a proof of this see Theorem~2.7 and Theorem~2.9 from Chapter \MakeUppercase{\romannumeral 2} of \cite{siegel2013combinatorial}.

Modulo equality, the set of all games under this addition operation forms an abelian group. This group, and other algebraic objects like it, are the central object of study in combinatorial game theory. For more on the foundations of combinatorial game theory we recommend \cite{albert2019lessons,BerleCG1982,siegel2013combinatorial}.

\emph{Placement games}, also called \emph{medium placement games}, are a class of rulesets introduced in \cite{brown2019note}, satisfying the following properties,
\begin{enumerate}
    \item the game begins with an empty game board, and
    \item a move is to place pieces on one, or more, parts of the board subject to the rules of the game, and 
    \item the rules must imply that if a piece can be legally placed in a certain position on the board, then it was legal to place this piece in that position at any time earlier in the game, and
    \item once played, a piece remains on the board, never being moved or removed.
\end{enumerate}
Note here that deleting a vertex $v$ in \DiPlace can be viewed as placing a piece onto $v$, with the added rule that if an in-neighbour of a vertex $u$, or $u$ itself, has a piece placed on it, then no further piece may be placed onto $u$. In this way, the set of vertices with pieces placed on them at the end of the game will form a minimal directed dominating set.
So \DiPlace is a placement game. 

Placement games that satisfy the additional constraint, that if a position can be reached by a sequence of legal moves, then any sequence of moves which reach this position is legal, are called \emph{strong placement games}. This class of games was introduced by Faridi, Huntemann, and Nowakowski in \cite{faridi2019games1}, while the theory of strong placement games has been expanded upon in \cite{faridi2019games2,huntemann2018class,huntemann2019game,huntemann2021counting}. Notice that \DiPlace is not a strong placement ruleset.

Placement games include many well studied rulesets such as \Nim \cite{bouton1901nim}, \Hackenbush \cite{BerleCG1982,Conwa1976a}, \Col \cite{Conwa1976a,fenner2015game}, \Snort \cite{BerleCG1982,huntemann2021bounding,huntemann2024SNORT,schaefer1978complexity}, \NoGo \cite{brown2019note, chou2011revisiting}, \Poset \cite{clow2021advances,fenner2015game, fenner2022complexity, grier2013deciding,soltys2011complexity}, \NodeKayles \cite{bodlaender2002kayles,brown2019note,guignard2009compound}, \ArcKayles \cite{burke2024complexity,huggan2016polynomial,schaefer1978complexity}, \Domineering \cite{berlekamp1988blockbusting,breuker2000solving, huntemann2021counting, uiterwijk201611}, and \TT \cite{clow2023ordinal}.  
Note that throughout the paper, when discussing rulesets such as \Hackenbush and \Poset, we mean blue, green, red \Hackenbush and \Poset.
\Domineering is an example of a strong placement game while \Hackenbush is an example of a placement game which is not a strong placement game. 

These rulesets are diverse in notable ways. For example, one can calculate the winner of a game of \Nim or \TT in polynomial time \cite{bouton1901nim,clow2023ordinal}, whereas in other rulesets such as \Snort, \Col, and \Poset calculating the winner of a given position is PSPACE-complete \cite{fenner2015game,grier2013deciding,schaefer1978complexity}. As another example of the diversity of possible rulesets that are presentable as placement games, observe that rulesets such as $\Nim$ both players always have the same option, while in other rulesets such as \Domineering, players never the same options.

Given a ruleset $R$, one of the natural questions to ask is, for which games $X$ does there exist a game $G$ in $R$ where $G=X$. For example, it was shown by Conway \cite{Conwa1976a} that every game of \Col is equal to $x$ or $x+\cgstar$ for a number $x$. Alternatively,  Huntemann \cite{huntemann2018class} showed that many other kinds of values such a nimbers and switches appear in strong placement games. For many rulesets $R$, there exists a game $X$ such that for all games $G$ in $R$, $G \neq X$. For example, if $R$ is an impartial ruleset, then no game in $R$ is equal to $1 \cong \game{0}{}$.
If for all $X$ there exists a game $G$ in $R$ such that $G=X$, then we say that the ruleset $R$ is \emph{universal}.

Surprisingly, no ruleset was known to be universal until 2019. 
Even more interestingly, when a ruleset was discovered to be universal by
Carvalho and Pereira dos Santos in \cite{carvalho2019nontrivial}, this ruleset, called \GenKonane, was well established in the literature \cite{ernst1995playing,hearn2005amazons,nowakowski2002more} and did not have a mathematical origin. Deciding a winner in \Konane, the game which inspires \GenKonane, is known to be PSPACE-complete \cite{hearn2005amazons} and the game originates from the ancestral peoples of Hawaii.

Since 2022, several more universal rulesets have been discovered.
The first of these was given by Suetsugu \cite{Suets2022a} who showed another ruleset, called \ruleset{Turning Tiles}, was universal. Unlike \Konane, \ruleset{Turning Tiles} is not an established ruleset, having been first defined in \cite{Suets2022a}. Determining the winner in \ruleset{Turning Tiles} is also PSPACE-complete \cite{yoshiwatari2023turning}. The next year, in 2023, Suetsugu \cite{Suets2023} defined two more rulesets, called \ruleset{Go on Lattice} and \ruleset{Beyond the Door}, which they showed were also universal. By reducing \ruleset{Go on Lattice} and \ruleset{Beyond The Door} to \ruleset{Turning Tiles} it was also shown that determining the winner of a game of \ruleset{Go on Lattice} or \ruleset{Beyond the Door} is also PSPACE-complete \cite{yoshiwatari2023turning}.

The primary contribution of this paper is to show that \DiPlace is a universal ruleset. 
This is significant because
\DiPlace is the first placement ruleset which has been shown to be universal.
This is particularly notable because
\DiPlace, and placement games in general, provide an opportunity to build a new bridge between combinatorial game theory and other areas of mathematics, in particular graph theory.
This is not only because \DiPlace is played on a graph, as other rulesets, for example \ruleset{Go on Lattice}, can be generalised to be played on general graphs.
Rather,
placement games are much closer to traditional problems in graph theory, such as graph coloring and domination, compared to \GenKonane or the rulesets introduced by Suetsugu.
For example \Col is a game where players take turns proper coloring vertices of a graph until no vertices remain that can be properly colored. 
As another example, in \NodeKayles players take turns picking vertices, so that the sets of all chosen vertices form an independent set.

Like other placement games played on graphs such as \Col, \Snort, and \NodeKayles, many tools from graph theory can be applied to study \DiPlace games. Given \DiPlace is universal, this may allow future research to study the space of all short games in new ways.
Furthermore, there is also the possibility of tools from combinatorial game theory being helpful in graph theory.

The rest of the paper is structured as follows. \cref{Section:Prelim} is devoted to introducing definitions and more necessary considerations. This includes standard notation from combinatorial game theory. Next, in \cref{Section:Disguise} we prove that many well known rulesets are special cases of \DiPlace. \cref{Section:Universal} extends this discussion by proving that \DiPlace is a universal ruleset. This should be viewed as the main contribution of our paper. In \cref{Section:Extremal} we consider the related extremal question of given a game $X$, what is the smallest integer $n$ such that there is a \DiPlace game $G$ on $n$ vertices, where $G = X$? We conclude with a discussion of future work.

\section{Preliminaries}\label{Section:Prelim}

We must introduce some standard notation and ideas from the combinatorial game theory literature. We require these tools for proving various statements throughout the paper. What follows will be familiar to combinatorial game theorists. Such readers can proceed directly to \cref{Section:Disguise}.

As noted in the introduction, the set of all normal play games modulo equality forms an abelian group under the disjunctive sum operation. However, we have yet to address what an additive inverse of a game $X$ is. We clarify this now. Given a game $X = \game{Y_1,\dots, Y_k}{Z_1,\dots, Z_t}$, we define the \emph{negative of $X$}, denoted $-X$, as follows,
$$
-X = \game{-Z_1,\dots, -Z_t}{-Y_1,\dots, -Y_k}.
$$
Notice that $-X$ is essentially $X$, except the roles of each player are reversed. Hence, if $G$ is a \DiPlace game, then $-G$ is the \DiPlace game played on the same digraph, where blue vertices in $G$ are red in $-G$, and red vertices in $G$ are blue in $-G$.

Also, note that $0 \cong \game{}{}$ is the additive identity, due to the fact that $X+0\cong X$, and any games with the same literal forms are equal. Hence, we note that $X+(-X)$, which can be written as $X-X$ is equal to $0$. Notice that whoever moves first in $X-X$ and in $0$ loses. This is no coincidence as $X-X + 0 \cong X-X$ must have the same outcome as $0 + 0 \cong 0$ by the definition of disjunctive sum. We further note that $X-Y$ is previous player win if and only if $X-Y = 0$ if and only if $X = Y$.

Using this observation we extend our equivalence relation, equality, into a partial order. Given games $X$ and $Y$, we say $X\leq Y$ if Right wins moving second in $X-Y$. Equivalently, $X< Y$  if and only if Right wins $X-Y$ moving first or second. Symmetrically, we $X \geq Y$ if Left wins $X-Y$ moving second and $X> Y$ if and only if Left wins $X-Y$ moving first or second. We note that if $X \leq Y$ and $X\geq Y$, then this implies $X=Y$. 
We use $X \cglfuz Y$ to denote $X \not\geq Y$.
This implies, that $X$ is Left win if and only if $X > 0$, that $X$ is right win if and only if $X< 0$, that $X$ is previous player win if and only if $X = 0$, and $X$ is next player win if and only if $X \not\geq 0$ and $X \not\leq 0$.

Finally, we observe that there are many pairs of games $X$ and $Y$ such that $X \not\geq Y$ and  $X \not\leq Y$. For example the game $\game{1}{-1} \cong \game{\game{0}{}}{\game{}{0}}$ satisfies $\game{1}{-1} \not\geq 0$ and  $\game{1}{-1} \not\leq 0$. We also note that $\game{1}{-1} \not\geq 1 \cong \game{0}{}$ and  $\game{1}{-1} \not\leq 1 \cong \game{0}{}$. Notice that $X \not\geq Y$ and  $X \not\leq Y$ if and only if $X-Y$ is next player win.

These observations provide a very useful way of proving how the value of two games $X$ and $Y$ are related.
That is, if we want to check if $X\leq Y$, then we simply play $X-Y$ and determine if Left loses moving first.
Such a proof, is called a playing proof.
Moreover, these inequalities provide a means to study when a game $X$ is better for one player, than a game $Y$. If $X$ is at least as good for Left than $Y$, then $X \geq Y$.

These inequalities are important in defining \emph{dominated options} and numbers. Given a game $X = \game{Y_1,Y_2,\dots, Y_k}{Z_1,Z_2\dots, Z_t}$, we say a Left option $Y_1$ is dominated if there exists another Left option, call it $Y_2$, such that $Y_1 \leq Y_2$. Here we can think of $Y_2$ being a better move for Left than $Y_1$. Because $Y_2$ is a better move, we can ignore $Y_1$, since Left will never move to $Y_1$ when using an optimal strategy. Thus, when $Y_1$ is a dominated Left option, we can remove $Y_1$ without changing the value of $X$. That is,
\[
X = \game{Y_2,\dots, Y_k}{Z_1,Z_2,\dots, Z_t}.
\]
Similarly, if $Z_1 \geq Z_2$, then we say that $Z_1$ is a dominated Right option. As before, we can remove a dominated option without changing the value of $X$,
\[
X = \game{Y_1,Y_2,\dots, Y_k}{Z_2,\dots, Z_t}.
\]
It is important to note that canonical forms have no dominated options for either player.

The second way we will use these inequalities outside of playing proofs, is by considering a notable class of games called \emph{numbers}. We say that a game $X$ is a number if and only if all $X^L \in L(X)$ and all $X^R \in R(X)$ satisfy $X^L < X^R$, and all followers of $X$ are numbers. We point out that it was shown in \cite{Conwa1976a} that every number $x$ is equal to a \Hackenbush stalk, that is a blue, red path in \Hackenbush with a single ground edge. We also note that this class of games in called numbers, because if we allow for long games, then the field of real numbers injects into the set of number games, which form a field using the disjunctive sum as addition and a multiplication operation we do not define here. For more on this see \cite{Conwa1976a}. The set of short numbers is equal to the image of the dyadic rationals under this map. Given a number $X$, we say we often write $X  = x$, if the image of this map sends the real number $x$ to the value of $X$, and we use $x$ to denote the canonical form of $X$. For example, $1 \cong \game{0}{}$ and $\frac{1}{2} \cong \game{0}{1}$. We note that integers, that is games $X = n$ where $n$ is an integer, are the unique games in which one or both players may have no option.

The final definition we must give in this preliminaries section is a game's birthday. Given a game $X$, we let the \emph{birthday of $X$} be the greatest integer $b$, such that there exists a sequence of moves beginning in $X$ of length $b$. If $X$ is a short game, then $b$ is finite, if $X$ is a long game then $b$ is an ordinal. The canonical form of a game $X$ is the unique game equal to $X$ with smallest birthday.

\section{Digraph Placement Games in Disguise}\label{Section:Disguise}
 
Whereas  \DiPlace is first defined in this paper, we show that many placement games from well studied rulesets are instances of \DiPlace. For a formal statement of this see Theorem~\ref{Theorem: Conflict Placement equiv Digraph}. This result is a natural analogue to Theorem~4.4 from \cite{faridi2019games1} which shows that every strong placement game is an instance of a position in a given ruleset played on a simplicial complex.

Let $R$ be a placement ruleset and $X$ a game in $R$. We can record all moves in $X$ as triples $(x,T,p)$ where $T$ is the type of piece to be placed, $x$ is the location of the piece on the game board, and $p$ is the player who can legally make this move if it is their turn. If Left and Right can both place piece $T$ at location $x$ on their turn, then we record two triples: $(x,T,\text{Left})$ and $(x,T,\text{Right})$, respectively. 
For a placement game $X$, we let $M(X)$ denote the moves of $X$ expressed as triples.
For any follower $Y$ of $X$, $M(Y) \subseteq M(X)$ by the definition of a placement game. The options of $Y$ are not necessarily a subset of the options of $X$.

Now we define a class of placement games which we show characterize the literal forms of \DiPlace.
A placement ruleset $R$ is a \emph{conflict placement ruleset} if for all games $X$ in $R$, and all moves $(x,T,p)$ in $X$ there exists a set of moves in $X$, $A_{x,T,p}$, such that $(x,T,p)$ is a legal move in a follower $Y$ of $X$ if and only if $Y$ is obtained from $X$ by a sequence of legal moves $(x_1,T_1,p_1),\dots, (x_n,T_n,p_n)$ and 
\[
A_{x,T,p} \cap \{(x_1,T_1,p_1),\dots, (x_n,T_n,p_n)\} = \emptyset.
\]
If $X$ is a game in a conflict placement ruleset, then we say  $X$ is a \emph{conflict placement game}. In a conflict placement game $X$ and a move $(x,T,p)$ in $X$, we call the set $A_{x,T,p}$ the \emph{conflict set} of $(x,T,p)$.

\begin{figure}[ht!]
\centering
\includegraphics[scale = 1.0]{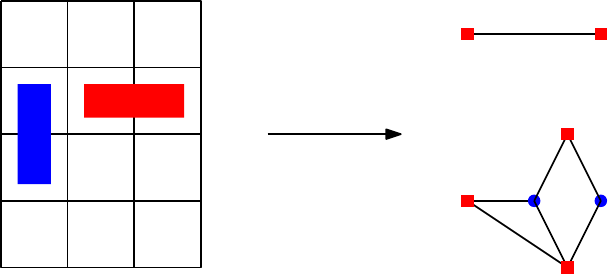}
\caption{An example of how a \Domineering position can be represented as a conflict placement game. For each red domino it is possible to play on the  board on the left there is a corresponding red vertex in the graph on the right side. For each blue domino it is possible to play on the board on the left side  there is a corresponding blue vertex in the graph on the right side. Vertices on the right side represent the triples $(x,T,p)$. Vertices $u$ and $v$ are adjacent if and only if they would overlap on the left  side; in \Domineering these overlaps generate conflict sets since the only thing that prevents a domino being placed is an already placed domino that overlaps it.}
\label{fig:Dom-is-conflict example}
\end{figure}

Notice that if $X$ is a short conflict placement game, then for all $(x,T,p) \in M(X)$, $(x,T,p)$ is in its own conflict set. Otherwise, the player $p$ can make the move $(x,T,p)$ an arbitrary number of times, which contradicts our assumption that $X$ is short. Hence, if a short conflict placement ruleset $R$ allows for a player to place a piece in the same position as an already placed piece, then the two pieces must be different types, otherwise this would allow an infinite sequence of play. Recall that all games we consider are short.

Observe that the rulesets \Nim, \Col, \Snort, \Poset, \NodeKayles, \ArcKayles, \Domineering, and \TT are all conflict placement rulesets. 
See Figure \ref{fig:Dom-is-conflict example} for an example of a \Domineering position as a conflict placement game; notice this is also a \DiPlace position. For example, in \Domineering a move $(x,T,p)$ has conflict set $\{(x',T',p'): x\cap x' \neq \emptyset\}$, and in \Poset if we record moves as elements of the poset, a move $x$ has conflict set 
$\{y: y \leq x\}$. In particular, \DiPlace is a conflict placement ruleset. Given a \DiPlace game $G$, and a vertex $v$ in $G$, the conflict set of the move which deletes $v$, is the set of moves given by either player deleting a vertex in $N^-[v]$. We will prove that every conflict placement game has the same literal form as a \DiPlace game. 

Familiar placement rulesets \Hackenbush and \NoGo are not conflict placement rulesets. For each ruleset there exist games where a move $(x,T,p)$ can be illegal after some set of moves $S$, where $\card{S} \geq 2$, are played but if a subset of the moves in $S$ are played then $x$ remains legal. For the reader who is familiar with the ruleset \Hackenbush see \cref{fig:non-conflict example} for an example of a game in \Hackenbush which is not a conflict placement game.

\begin{figure}[ht!]
\centering
\begin{tikzpicture}
\tikzstyle{hackennode}=[draw,circle,inner sep=0,minimum size=4pt]
\tikzstyle{hackenline}=[line width=3pt]
    \draw[hackenline,black] (0,0) -- (4,0);
    \node[hackennode][fill= white] (1) at (2,0) {}; 
    \node[hackennode][fill= white] (2) at (0,2) {}; 
    \node[hackennode][fill= white] (3) at (4,2) {};

    \draw[hackenline,blue] (1) -- (2);
    \draw[hackenline,red] (1) -- (3);
    \draw[hackenline,blue] (2) -- (3);
\end{tikzpicture}
\caption{A game of \Hackenbush, with literal form $\game{1-1,-\frac{1}{2}}{\game{0,1}{}}$, that is not a conflict placement game. This game is not a conflict placement game because the top edge cannot be cut if both bottom edges are cut, but the top edge can be cut if at most one of the bottom edges is cut.}
\label{fig:non-conflict example}
\end{figure}

\begin{theorem}\label{Theorem: Conflict Placement equiv Digraph}
  The game $X$ is a conflict placement game if and only if there exists a \DiPlace game $G \cong X$.
\end{theorem}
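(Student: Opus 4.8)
The plan is to prove the two directions separately, with the forward direction (every \DiPlace game is a conflict placement game) being essentially already observed in the text, and the reverse direction (every conflict placement game has the same literal form as a \DiPlace game) being the substantive construction. For the forward direction, I would simply record the observation already made: given a \DiPlace game $G$, encode the move deleting vertex $v$ as the triple whose location is $v$, whose type records the colour $\phi(v)$, and whose player is determined by $\phi(v)$; then set $A_{v} = \{$moves deleting some $u \in N^-[v]\}$. One checks directly from the deletion rule that $v$ is still available in a follower $G/[u_1,\dots,u_n]$ if and only if none of the $u_i$ lies in $N^-[v]$, which is exactly the conflict-placement condition, and $v \in N^-[v]$ guarantees $v$ is in its own conflict set so the game is short. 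This gives a conflict placement ruleset containing $G$.

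For the reverse direction, let $X$ be a conflict placement game with move set $M(X)$ and conflict sets $A_{x,T,p}$. I would build a \DiPlace digraph $G$ whose vertex set is $M(X)$: colour a vertex $(x,T,p)$ blue if $p = \text{Left}$ and red if $p = \text{Right}$, and put a directed edge from $(x,T,p)$ to $(x',T',p')$ precisely when $(x',T',p') \in A_{x,T,p}$ — that is, make $N^+[(x,T,p)]$ equal to its conflict set (which contains the vertex itself, as noted for short games, so the closed out-neighbourhood is literally the conflict set). The claim is then that $G \cong X$. This should be proved by induction on birthday: I would show that for every follower $Y$ of $X$ reachable by a legal sequence $S$ of moves, the surviving subgraph $G/S$ (delete each move in $S$ together with its out-neighbourhood) has vertex set exactly $M(Y)$ — here I must use that a move survives in $G$ after $S$ iff its conflict set is disjoint from $S$ iff it is legal in $Y$ — and that the available first moves for each player in $G/S$ biject with the options of $Y$, after which $\cong$ follows by applying the inductive hypothesis to each option. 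One subtlety to handle: if $X$ itself has $M(X) = \emptyset$ (i.e.\ $X \cong 0$) the construction yields the empty digraph, which is fine; and one must check that the correspondence respects the literal-form recursion $\game{\cdots}{\cdots}$ on the nose, not merely up to value.

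The main obstacle I anticipate is the bookkeeping in the induction, specifically verifying that the \DiPlace followers $G/S$ are in exact correspondence with the placement-game followers of $X$ and, crucially, that the \emph{options} match literally. The placement-game axioms only guarantee $M(Y) \subseteq M(X)$ for followers $Y$; they do not a priori say that two distinct legal sequences reaching the same board position yield the same game $Y$ (that extra hypothesis is exactly what defines \emph{strong} placement games, which \DiPlace is not). So I would need to be careful that the conflict-placement condition — legality of $(x,T,p)$ in $Y$ depends only on whether $A_{x,T,p}$ meets the sequence, not on the order — is enough to make "the set of surviving moves" a well-defined invariant of the position, and that this invariant is precisely what drives the option structure on both sides. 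Once that is pinned down, matching the colours to the players and reading off $L(\cdot)$ and $R(\cdot)$ from surviving blue/red vertices is routine, and the birthday induction closes.
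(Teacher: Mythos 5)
Your overall strategy is the same as the paper's: take $V(G) = M(X)$, colour each move-vertex by its player, and induct on legal move sequences to show that the surviving subgraph after a sequence $S$ has vertex set exactly $M(Y)$ for the corresponding follower $Y$. However, your edge orientation is reversed, and this is a genuine error rather than a cosmetic one. You put an edge from $(x,T,p)$ to $(x',T',p')$ when $(x',T',p') \in A_{x,T,p}$, so that $N^+[(x,T,p)] = A_{x,T,p}$; then playing a move deletes \emph{its own} conflict set. But the conflict-placement semantics say that playing $m'$ must kill every move $m$ with $m' \in A_{m}$, i.e.\ the set of moves that have $m'$ in \emph{their} conflict sets. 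These coincide only when the relation ``$m \in A_{m'}$'' is symmetric, which it need not be: in \Poset, $A_x = \{y : y \le x\}$, and with your orientation playing $y$ would delete the elements \emph{below} $y$ rather than those above it. For the poset on $\{a,b,c\}$ with $a<c$, $b<c$ and $a,b$ incomparable, the correct game has every option equal to $0$ or $\cgstar$, while your digraph gives an option (playing $a$) to a two-vertex position of value $\nimber2$; so neither the literal form nor even the value survives the reversal. The correct rule, which is what the paper uses, is to put an edge from $m$ to $m'$ exactly when $m \in A_{m'}$, so that $N^-[m'] = A_{m'}$ and $N^+[m]$ is the set of moves killed by playing $m$.

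Note also that your later verification sentence (``a move survives in $G$ after $S$ iff its conflict set is disjoint from $S$'') states the \emph{correct} condition but does not follow from your stated edge rule, under which a move survives iff it lies in no played move's conflict set; this internal inconsistency is the symptom of the reversed orientation. Once the orientation is fixed, the rest of your outline --- the forward direction by the observation that $N^-[v]$ serves as a conflict set, the induction bounded by the length of a maximal move sequence, and your (legitimate) concern that legality in a follower depends only on the set of played moves and not their order, which is exactly what the conflict-set axiom guarantees --- matches the paper's proof.
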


\begin{proof}
As noted, \DiPlace is a conflict placement ruleset  and thus every \DiPlace game $G$ has the same literal form as a conflict placement game. Now suppose $X$ is a conflict placement game.

We define a \DiPlace game $G = (V,E,\phi)$ as follows. 
Let the vertices be $V = M(X)$. Next, let $((x,T,p),(x',T',p'))\in E$ if and only if $(x,T,p)$ is in the conflict set of $(x',T',p')$. Finally, let $\phi(x,T,p) = \text{blue}$ if and only if $p = \text{Left}$ and $\phi(x,T,p) = \text{red}$ otherwise.

We show that $G \cong X$. In particular, we make the stronger claim that if any legal sequence of moves $(x_1,T_1,p_1),\dots,(x_n,T_n,p_n)$ is made from $X$ resulting in a game $Y$, then 
\[
G/[(x_1,T_1,p_1),\dots,(x_n,T_n,p_n)] \cong Y.
\]
This claim implies that $G \cong X$, as the game resulting from an empty sequence of moves made from $X$ is $X$.

As $X$ is a short game there exists an integer $b$, such that for all legal sequences of moves $(x_1,T_1,p_1),\dots,(x_n,T_n,p_n)$ made in $X$, $n \leq b$. Let $(x_1,T_1,p_1),\dots,(x_n,T_n,p_n)$ be a fixed sequence of legal moves made in $X$. We proceed by induction on $m = b-n$.

If $m = 0$, then $n = b$ and by our choice of $b$, $Y \cong 0$. This implies that no further pieces can be placed in the game, which implies for every move $(x,T,p)$ in $X$, a move $(x',T',p')$ in the conflict set of $(x,T,p)$ has been played in $(x_1,T_1,p_1),\dots,(x_n,T_n,p_n)$.
By our definition of $G$, this implies that,
\[
V \subseteq \bigcup_{1\leq i \leq n} N^+[(x_n,T_n,p_n)].
\]
Hence, $G/[(x_1,T_1,p_1),\dots,(x_n,T_n,p_n)] \cong 0$. Thus, if $m=0$, then 
\[G/[(x_1,T_1,p_1),\dots,(x_n,T_n,p_n)] \cong 0\cong Y\] as required.

Suppose now that $m>0$ and for all $0 \leq M < m$, if $(x'_1,T'_1,p'_1),\dots,(x'_{N},T'_{N},p'_N)$ is a sequence of legal moves in $X$ which results in a position $Y$ and $M = b -N$, then 
\[
G/[(x'_1,T'_1,p'_1),\dots,(x'_N,T'_N,p'_N)] \cong Y.
\]
Recall that $M(Y) \subseteq M(X)$. Hence, every move in $Y$ can be written in the form $(x,T,p)$. 
If $Y$ has no moves, then by the same argument as in the $m=0$ case, \[G/[(x_1,T_1,p_1),\dots,(x_n,T_n,p_n)] \cong 0 \cong Y.\] 
Suppose then that $Y$ has at least one move.

Let $(x,T,p)$ be a fixed move in $Y$
and let $Y_{x,t,p}$ be the game obtained making the move $(x,T,p)$ in $Y$.
The sequence of moves $(x_1,T_1,p_1),\dots,(x_n,T_n,p_n),(x,T,p)$ is length $n+1$, so by
the induction hypothesis $G/[(x_1,T_1,p_1),\dots,(x_n,T_n,p_n),(x,T,p)] \cong Y_{x,t,p}$.
As $(x,T,p)$ was an move in $Y$ chosen without loss of generality, we conclude that the same result holds for all moves in $M(Y)$.
Hence, what remains to be shown is that player $p$ can delete vertex $(x,T,p)$, for any move $(x,T,p) \in M(Y)$, and that the vertex set of $G/[(x_1,T_1,p_1),\dots,(x_n,T_n,p_n)]$ is equal to $M(Y)$.

We begin by proving that for each move $(x,T,p)$ in $Y$, player $p$ deleting vertex $(x,T,p)$ is a legal move in $G/[(x_1,T_1,p_1),\dots,(x_n,T_n,p_n)]$. By the definition of $\phi$, if $(x,T,p)$ is a vertex in
$G/[(x_1,T_1,p_1),\dots,(x_n,T_n,p_n)]$, then player $p$ can delete vertex $(x,T,p)$ on their turn.
Then it is sufficient to prove that for each move $(x,T,p)$ in $Y$, a vertex $(x,T,p)$ exists in $G/[(x_1,T_1,p_1),\dots,(x_n,T_n,p_n)]$.
Thus it is sufficient to prove that the vertex set of $G/[(x_1,T_1,p_1),\dots,(x_n,T_n,p_n)]$ is equal to the set of moves in $Y$ stored as triples.

Suppose that the vertex set of $G/[(x_1,T_1,p_1),\dots,(x_n,T_n,p_n)]$ is not equal to $M(Y)$. 
Either there exists a move $(x,T,p)$ in $Y$ that is not a vertex of 
\[
G/[(x_1,T_1,p_1),\dots,(x_n,T_n,p_n)],
\] 
or there exists a vertex of $G/[(x_1,T_1,p_1),\dots,(x_n,T_n,p_n)]$, call it $v$, which is not equal to an move $(x,T,p)$ in $Y$. 
Notice that if $m = b$, then $n=0$, implying that $X=Y$. 
Hence, if $m=b$, then $M(Y) = M(X) = V$ by the definition of $G$.
Suppose then that $m < b$.

Suppose that for some move $(x,T,p)$ in $Y$, there is no vertex $(x,T,p)$. As $(x,T,p)$ is a move in $Y$, $(x,T,p)$ is a move in $X$. Then, $(x,T,p)$ is a vertex in $G$. As $(x,T,p)$ is not a vertex in
$G/[(x_1,T_1,p_1),\dots,(x_n,T_n,p_n)]$, it must be the case that
\[
(x,T,p) \in \bigcup_{1\leq i \leq n} N^+[(x_i,T_i,p_i)].
\]
But by the definition of $G$, this implies that there exists an integer $i$, such that $(x_i,T_i,p_i)$ is in the conflict set of $(x,T,p)$. This contradicts $(x,T,p)$ being an move in $Y$.

Suppose then that there exists a vertex of $G/[(x_1,T_1,p_1),\dots,(x_n,T_n,p_n)]$, call it $v$, which is not equal to a move $(x,T,p)$ in $Y$. As $G/[(x_1,T_1,p_1),\dots,(x_n,T_n,p_n)]$ is a subgraph of $G$, $v$ is a vertex in $G$. Hence, $v = (x',T',p')$, which is an move in $X$.
As $(x',T',p')$ is not an move in $Y$, there must exists a integer $i$ such that $(x_i,T_i,p_i)$ is in the conflict set of $(x',T',p')$. 
But this implies that $v = (x',T',p')$ is not a vertex in $G/[(x_1,T_1,p_1),\dots,(x_n,T_n,p_n)]$. 
As this is a contradiction, we observe that the vertex set of $G/[(x_1,T_1,p_1),\dots,(x_n,T_n,p_n)]$ is in fact equal to $M(Y)$.

Therefore, 
\[
G/[(x_1,T_1,p_1),\dots,(x_{n},T_{n},p_{n})] \cong Y
\]
which implies that $G\cong X$ as $Y$ was chosen without loss of generality.
\end{proof}

There are immediate consequences of this concerning the computational complexity of deciding a winner in a \DiPlace game.

\begin{corollary}
    Determining the winner of a \DiPlace game $G$ is PSPACE-hard in the number $n$ of vertices in $G$.
\end{corollary}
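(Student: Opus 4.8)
The goal is to show that deciding the winner of a \DiPlace game is PSPACE-hard. Since \cref{Theorem: Conflict Placement equiv Digraph} shows that every conflict placement game has the same literal form as a \DiPlace game, and games with the same literal form have the same outcome (outcome depends only on the literal form, as it is determined by the game tree), it suffices to exhibit a single conflict placement ruleset for which determining the winner is already PSPACE-hard, together with a polynomial-time construction of the corresponding \DiPlace game. The paper has already observed in the text preceding the theorem that \Poset, \Col, \Snort, and \NodeKayles are all conflict placement rulesets, and it cites \cite{schaefer1978complexity, grier2013deciding, fenner2015game} for the fact that determining the winner of a position in \Snort, \Col, or \Poset is PSPACE-complete. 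So the plan is to reduce from one of these, say \NodeKayles or \Col.

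**Key steps in order.** First, fix a conflict placement ruleset $R$ — I would use \NodeKayles, since its conflict structure is the cleanest — for which the winner-determination problem is PSPACE-hard; cite the appropriate reference. Second, recall from the discussion before \cref{Theorem: Conflict Placement equiv Digraph} the explicit conflict sets: for a \NodeKayles position on a graph $H$, the move placing a piece on vertex $x$ has conflict set consisting of all moves placing a piece on a vertex in $N_H[x]$. Third, invoke the construction in the proof of \cref{Theorem: Conflict Placement equiv Digraph}: given a \NodeKayles position $X$ on $H$ with $|V(H)| = k$, the associated \DiPlace game $G$ has vertex set $M(X)$ (at most $2k$ triples), with an edge from $(x,T,p)$ to $(x',T',p')$ exactly when $(x,T,p)$ lies in the conflict set of $(x',T',p')$, and coloring determined by the player. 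This $G$ has $O(k)$ vertices and $O(k^2)$ edges and is plainly constructible in polynomial time from $H$. Fourth, observe $G \cong X$ by the theorem, hence $G$ and $X$ have the same game tree and therefore the same outcome. Therefore an algorithm deciding the winner of \DiPlace games in time polynomial in the number of vertices would decide the winner of \NodeKayles positions in polynomial time, contradicting PSPACE-hardness of the latter unless $\mathsf{P} = \mathsf{PSPACE}$. This gives PSPACE-hardness of \DiPlace.

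**Main obstacle.** There is no deep obstacle here — the real content was \cref{Theorem: Conflict Placement equiv Digraph}, and this corollary is a routine complexity-theoretic wrapper. The one point requiring a little care is making sure the reduction is genuinely polynomial and that the parameter in the hardness statement (the number $n$ of vertices of $G$) is polynomially related to the size of the source instance; this is immediate since $|V(G)| = |M(X)| \le 2\,|V(H)|$. A second minor point worth a sentence is justifying that literal-form equality ($G \cong X$) implies identical outcomes: outcome is a function of the game tree, and $\cong$ is precisely equality of game trees, so this is immediate from the definitions in \cref{Section:Prelim}. I would phrase the proof to foreground the reduction and relegate these two observations to a clause each.
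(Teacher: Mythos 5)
Your proposal is correct and follows essentially the same approach as the paper: reduce from a known PSPACE-hard conflict placement ruleset, apply \cref{Theorem: Conflict Placement equiv Digraph} to obtain a literal-form-equal \DiPlace game, and observe that the translation is polynomial with only a constant-factor blowup in the number of vertices. The only difference is cosmetic --- the paper reduces from \Poset (citing its PSPACE-completeness in the number of poset elements) rather than from \NodeKayles, but the structure and justification of the argument are identical.
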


\begin{proof}
    It was shown in \cite{fenner2022complexity} that it is PSPACE-complete to decide the winner of \Poset $P$ in terms of the number of elements in $P$. By \cref{Theorem: Conflict Placement equiv Digraph} every game in \Poset has the same literal form as a game in \DiPlace. Furthermore, each vertex in a \Poset game $P$ will correspond to at most $2$ vertices in the \DiPlace game $G$ with the same literal form as $P$, while transforming $P$ into $G$ is polynomial time in the number of elements in $P$.  Hence, there exists games in \DiPlace where it is PSPACE-complete to decided the winner. This completes the proof.
\end{proof}

\section{Universality}\label{Section:Universal}

Despite there being literal forms such as the example in \cref{fig:non-conflict example} which do not appear as \DiPlace games,
in this section we show that \DiPlace is a universal ruleset. See \cref{Theorem: Every value appears in DiPlace games} for this result. Our proof is constructive and requires some preliminaries from the literature. Before proving \cref{Theorem: Every value appears in DiPlace games} we consider four relevant lemmas, the first two of which are familiar to those familiar with combinatorial game theory.

\begin{lemma}[Chapter \MakeUppercase{\romannumeral 2} Theorem~3.7 \cite{siegel2013combinatorial}]\label{Lemma: Archimedean Principle}
Let $G$ be a short game. Then there exists an integer $n$ such that $-n < G < n$.
\end{lemma}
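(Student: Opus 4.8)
The plan is to prove a slightly stronger and cleaner statement by induction on the birthday of $G$: \emph{for every short game $G$ and every integer $n$ exceeding the birthday of $G$, one has $-n < G < n$.} Since the birthday of $-G$ equals that of $G$, and since $-n < G$ is equivalent to $-G < n$, it suffices to establish the single inequality $G < n$ for every integer $n$ greater than the birthday of $G$ and then apply it to $-G$ for the lower bound. Using the outcome characterisations recorded in \cref{Section:Prelim}, $G < n$ is equivalent to $n - G > 0$, i.e.\ to Left winning $n - G \cong n + (-G)$ whether she moves first or second. Because $n$ exceeds a birthday it is a positive integer, so as a game $n \cong \game{n-1}{\ }$, meaning Right has no move in the $n$-component and Left's only move there is to $n-1$.

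Writing $G \cong \game{G^L}{G^R}$ (so every option has strictly smaller birthday and $-G \cong \game{-G^R}{-G^L}$), I would argue the two starting-player cases separately. If Right moves first in $n - G$, he cannot move in the $n$-component, and each of his moves in $-G$ reaches a position $n - G^L$ with $G^L \in L(G)$ --- and if $L(G) = \emptyset$ he has no move at all and Left wins. Since $G^L$ has smaller birthday than $G$, the inductive hypothesis gives $G^L < n$, hence $n - G^L > 0$, so Left, to move, wins. If instead Left moves first in $n - G$, she moves in the $n$-component to $(n-1) - G$; it then remains to see that $(n-1) - G \ge 0$, i.e.\ that Left wins $(n-1) - G$ moving second. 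But every Right move there reaches $(n-1) - G^L$ with $G^L \in L(G)$ (none if $L(G) = \emptyset$), and since the birthday of $G^L$ is strictly below that of $G$ and hence below $n-1$, the inductive hypothesis applied with the integer $n-1$ gives $G^L < n-1$, so $(n-1) - G^L > 0$ and Left wins. The base case $G \cong 0$ is the degenerate specialisation of both arguments, so no separate treatment is needed.

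The order arithmetic used here --- that $G < n \iff n - G > 0$, that $X > 0$ means Left wins no matter who starts while $X \ge 0$ means Left wins moving second, and that a player with no move loses --- is immediate from \cref{Section:Prelim}; indeed this is essentially the textbook proof of Chapter~\MakeUppercase{\romannumeral 2}, Theorem~3.7 of \cite{siegel2013combinatorial}. The one place foresight is required, and the reason for proving the statement uniformly in $n$ rather than for a single $n$, is the first-player case: Left's move lowers the integer component only to $n-1$, so the conclusion must already be available for the games $(n-1) - G^L$. Since the options $G^L$ have birthday strictly less than that of $G$, while $n-1$ is at least the birthday of $G$, the inductive hypothesis does apply with the integer $n-1$, and the argument closes with no real obstacle --- the lemma is genuinely easy once the inductive statement is phrased correctly.
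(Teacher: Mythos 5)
Your argument is correct: the induction on birthday, reducing $-n<G<n$ to the single inequality $G<n$ and checking the two starting-player cases in $n-G$ (with the key observation that Left's move to $(n-1)-G$ forces you to carry the integer $n$ uniformly through the induction), is exactly the standard textbook proof. The paper itself gives no proof of this lemma --- it simply cites Chapter~\MakeUppercase{\romannumeral 2}, Theorem~3.7 of \cite{siegel2013combinatorial} --- so your write-up is a correct, self-contained rendering of the same argument the cited source uses.
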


\begin{lemma}[Chapter \MakeUppercase{\romannumeral 2} Theorem~3.21 \cite{siegel2013combinatorial}]\label{Lemma: Number Translation Theorem}
    Let $G = \game{Y_1,\dots,Y_k}{Z_1,\dots,Z_t}$ be a short game not equal to a number and $x$ a short number. Then 
    $$
    G + x = \game{Y_1 + x, \dots, Y_k+x}{Z_1+x,\dots, Z_t+x}.
    $$
\end{lemma}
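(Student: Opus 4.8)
The plan is a playing proof on the single game $G + x - H$, where $H := \game{Y_1+x,\dots,Y_k+x}{Z_1+x,\dots,Z_t+x}$ is the right-hand side of the claimed identity; recall that $G + x = H$ holds exactly when $G + x - H$ is a previous player win. We may assume $x$ is in canonical form, so that every option of $x$ is again a number of strictly smaller birthday and $x^L < x < x^R$. We also note that, in its standard form, this result carries the hypothesis that $G$ is not equal to a number, which is genuinely needed (it fails, e.g., for $G = 1$ and $x = \tfrac12$); we argue in that setting. Since $-G$ is again not equal to a number, $-x$ a number, and $-H$ is precisely the translate of $-G$ by $-x$, negation symmetry lets us reduce to showing that Right, moving first in $G + x - H$, loses --- applying that statement to $-G$ and $-x$ then gives that Left also loses moving first, so $G + x - H = 0$.

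Two of the three families of first moves close at once. If Right moves inside $G$, reaching $Z_j + x - H$, then Left answers inside $-H$ by moving to its Left option $-(Z_j + x)$ with the same index $j$, reaching a position equal to $0$. If Right moves inside $-H$, to its Right option $-(Y_i + x)$, the position becomes $G + x - Y_i - x = G - Y_i$, and Left answers $G \to Y_i$, again reaching a position equal to $0$. In both cases Right is then to move in a previous player win and loses, so all of the content lies in the one remaining move.

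That move is Right playing inside the number, $x \to x^R$, which leaves $G + x^R - H$ with Left to move; I must show Left wins this, equivalently $G + x^R \not\leq H$. Here I would induct on the birthday of $x$: since $b(x^R) < b(x)$ and $G$ is still not a number, the result applied to the pair $(G, x^R)$ gives $G + x^R = H^{+}$, where $H^{+}$ is $H$ with every option shifted up by the positive number $x^R - x$. It then remains to show $H^{+} \not\leq H$, which I would extract from the standard option-by-option criterion for $\leq$, using that each Left and each Right option of $H^{+}$ strictly exceeds the corresponding option of $H$, and invoking the induction hypothesis on the translated subpositions that occur in that analysis.

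The hard part is exactly this last step: turning the intuition ``shifting every option of $H$ up by the positive amount $x^R - x$ strictly helps Left'' into a non-circular argument, given that at this stage we do not yet know $H = G + x$. What makes the induction close is precisely that $x$ is a number, so $x^L, x, x^R$ are pairwise comparable and their options are again numbers; without the number structure the argument stalls, since in general an option of a game need not be comparable to the game itself. Once this case is handled, the three cases together show Right loses moving first in $G + x - H$, and the negation symmetry above completes the proof.
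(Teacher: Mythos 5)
First, for context: the paper does not prove this lemma at all --- it is imported as Theorem~3.21 of Chapter~II of \cite{siegel2013combinatorial} and used as a black box --- so there is no in-paper argument to compare yours against. Several things in your proposal are right and worth saying explicitly: you correctly observe that the statement as printed omits the hypothesis that $G$ is not equal to a number, and that the identity genuinely fails without it (for $G=1$ and $x=\tfrac{1}{2}$ the right-hand side is $\game{\tfrac{1}{2}}{} = 1 \neq \tfrac{3}{2}$); the reduction by negation symmetry is sound; and the two mirroring cases (Right moves in $G$, Right moves in $-H$) are handled correctly and are exactly the standard skeleton of the playing proof.

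The gap is in the one case you yourself flag as ``the hard part,'' and it is genuine, because that case carries the entire content of the theorem. Your plan is to use induction on the birthday of $x$ to rewrite $G+x^R$ as $H^{+}$ and then conclude $H^{+}\not\leq H$ because each option of $H^{+}$ strictly exceeds the corresponding option of $H$. But pointwise comparison of corresponding options is not a valid criterion for strict comparison of games: the pairing strategy in $H^{+}-H$ yields at most $H^{+}\geq H$, and ruling out $H^{+}=H$ is precisely what remains to be done. Indeed, your own counterexample shows that strict pointwise domination of options can coexist with equality of the games when the non-number hypothesis is dropped: for $G=1$ and $x=\tfrac{1}{2}$, every option of $\game{0+x^L}{}$ is strictly less than the corresponding option of $\game{0+x}{}$, yet both forms equal $1$. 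So any correct completion must use, at exactly this point, that $G$ is not equal to a number, and your sketch never does; as written it would ``prove'' the false unrestricted statement. The standard way to close this case is the Weak Number Avoidance Theorem (the result immediately preceding this one in \cite{siegel2013combinatorial}): if $x$ is a number and $G$ is not, then whenever Left can win $G+x$ moving first she can do so by moving in $G$. That theorem has its own nontrivial induction and is what shows the move $x\to x^R$ in $G+x-H$ is answerable. As it stands, your proposal establishes the two routine thirds of the argument and defers the load-bearing third.
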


The first lemma we must prove is that every short number is equal to some \DiPlace game. For completeness we state it formally as follows.

\begin{lemma}\label{Lemma: Numbers are digraph games}
    If $x$ is a short number, then there exists a \DiPlace game $G = x$.
\end{lemma}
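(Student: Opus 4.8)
The plan is to show directly that every short number $x$ arises as a \DiPlace game, by induction on the birthday of $x$, building the digraph for $x$ out of the digraphs for simpler numbers. Recall that a short number has canonical form $0$, a nonzero integer $n$, or a dyadic rational $\game{x^L}{x^R}$ where $x^L$ and $x^R$ are numbers of smaller birthday. Since $-G$ is realized by swapping the two colours (as noted in the preliminaries), it suffices to handle nonnegative $x$. The construction should produce, for each such $x$, a \DiPlace game $G_x$ whose root position equals $x$, together with enough control over the Left and Right options of $G_x$ as positions (not just values) to make the inductive step go through.

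First I would dispose of integers: $G_0$ is the empty digraph; $G_n$ for a positive integer $n$ should be a gadget all of whose vertices are blue, arranged so that Left has exactly one ``long'' move (e.g.\ a directed path of $n$ blue vertices, Left always choosing the source, so that Left has $n$ consecutive moves and Right never moves), giving value $n \cong \game{n-1 \mid}$. Next, for a dyadic $x = \game{x^L \mid x^R}$ I would take (disjoint copies of) realizations $G_{x^L}$ of $x^L$ and $G_{x^R}$ of $x^R$ and add two new vertices: a blue vertex $b$ whose out-neighbourhood is chosen so that Left's only new move from the root is to delete $b$ and thereby arrive exactly at $G_{x^L}$ (so $b$ points to everything except the vertices of $G_{x^L}$, i.e.\ to all of $G_{x^R}$ and to $r$), and symmetrically a red vertex $r$ pointing to all of $G_{x^L}$ and to $b$; edges between $G_{x^L}$ and $G_{x^R}$ can be added freely (say, make every pair adjacent) so that once a player commits to one side the other side vanishes. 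One must then check that in the resulting position Left's options are precisely $\{G_{x^L}\}$ together with (possibly) options obtained by playing inside $G_{x^L}$ or $G_{x^R}$, and likewise for Right — and argue that all the ``internal'' options are dominated, so the value collapses to $\game{x^L \mid x^R} = x$. Here \cref{Lemma: Number Translation Theorem} and the fact that the canonical form of a number has its canonical options as Left/Right options are the relevant tools: because $x^L < x < x^R$ are numbers, any Left option into $G_{x^L}$ or into a follower of $G_{x^R}$ has value $\le x^L$ or is some number/position that is $\le$ the move to $x^L$, hence dominated, and symmetrically on the right.

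The main obstacle, and the step I would spend the most care on, is exactly this domination argument: a \DiPlace move does not simply ``hand the opponent one summand'' — deleting $b$ also annihilates all of $G_{x^R}$, and moves inside the copies of $G_{x^L}, G_{x^R}$ interact through the added cross-edges and through $b, r$. So I need to track, for the inductive hypothesis, not only that $G_{x}=x$ but a structural statement of the form ``every Left option of $G_x$ has value $\le x^L$ and every Right option has value $\ge x^R$'' (the defining inequality of a number), which is preserved by the construction; then the value of $G_x$ is a number with those options and simplicity (\cite{siegel2013combinatorial}, Chapter~II) forces it to equal $x$. An alternative that sidesteps the interaction entirely, and which I would fall back on if the bookkeeping gets unpleasant, is to realize $\frac12$ directly as a small explicit \DiPlace game, realize every dyadic via \cref{Lemma: Number Translation Theorem} and a (yet-to-be-established, but easy) closure of \DiPlace-realizable values under the disjunctive sum of vertex-disjoint digraphs — since $x + y$ is literally the \DiPlace game on the disjoint union $G_x \sqcup G_y$ — together with a doubling/halving gadget; but the cleanest writeup is the birthday induction above, so that is the route I would take.
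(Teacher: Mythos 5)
Your route is genuinely different from the paper's. The paper proves this lemma in three lines by reduction: by \cref{Theorem: Conflict Placement equiv Digraph} every \Poset literal form is a \DiPlace literal form; a \Hackenbush stalk (a blue--red path with a single ground edge) has the same literal form as a \Poset position; and Conway showed every short number equals such a stalk. So the paper gets the lemma essentially for free from the characterization theorem, at the price of quoting Conway's result. You instead build the digraph recursively, which is self-contained and is in effect a numbers-only prototype of the paper's own gadget in \cref{Lemma: Gadget Works Lemma}: your vertices $b$ and $r$ play exactly the role of the $b_i$ and $r_j$ there, and you can drop the padding vertices $x_i,y_i$ only because for numbers the inequality $x^L<x<x^R$ already makes all internal moves bad. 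Your approach buys explicit small realizations (compare the appendix) at the cost of a full domination analysis that the paper's reduction avoids entirely.

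That analysis is where your sketch needs a concrete repair: the invariant you propose to carry through the induction --- every Left option of $G_x$ has value $\le x^L$ and every Right option has value $\ge x^R$ --- is false for your construction. Take $x=\tfrac12=\game{0}{1}$: then $G_{x^L}$ is empty, $G_{x^R}$ is a single blue vertex $w$, and Left's move at $w$ leaves only the mutually adjacent pair $\{b,r\}$, an option of value $\cgstar$, which is confused with $0=x^L$ rather than $\le$ it. The construction still yields $\game{0,\cgstar}{1}=\tfrac12$, but only because what the simplicity theorem actually requires is the weaker condition $G^L \cglfuz x$ for every Left option (and symmetrically on the right), with $x$ the simplest number fitting between the distinguished options. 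Moreover, for a Left move inside $G_{x^R}$, Right's reply at $r$ leaves a Left option of $G_{x^R}$, and the weak invariant for $G_{x^R}$ only gives that this is $\cglfuz x^R$, not $\cglfuz x$; closing the induction needs the additional (true, but to-be-proved) fact that $(x^R)^L \le x^L$ for canonical dyadics, or an invariant of the form ``every Left option of $G_y$ is $\cglfuz z$ for every number $z>y^L$.'' As written, your inductive step breaks at the first non-integer; with the corrected invariant the argument goes through.
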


\begin{proof}
    By \cref{Theorem: Conflict Placement equiv Digraph} every literal form appearing in \Poset appears as a literal form in \DiPlace. As every \Hackenbush game played on a path with a single ground edge has the same literal form as a position in \Poset, every such \Hackenbush game has the same literal form as a \DiPlace game. It was shown by Conway \cite{Conwa1976a} that every short number $x$ is equal to a \Hackenbush game played on a path with a single ground edge. Hence, for all numbers $x$, there exists a \DiPlace game equal to $x$. This completes the proof.
\end{proof}

The next step in the proof of the main theorem requires a complicated construction (gadget).
Our construction is recursive. 
Let's first explore a relatively simple case.
Given \DiPlace games $G$ and $H$ we aim to construct a \DiPlace game equal to $\game{G}{H}$. Hence, we take a copy of $G$, a copy $H$, and some additional vertices and edges.
To get a Left option to $G$ we add a blue vertex which has an edge to every vertex not in our copy of $G$ and to get an option to $H$ we add a red vertex with an edge to every vertex not in our copy of $H$.
To ensure that the moves in either the copy of $G$ or the copy $H$ do not affect the value we include additional red and blue vertices that ensure dominated incentives for these moves.
The inclusion of these extra vertices (and some edges) results in a game with value $\game{-1,G}{1,H}$.
We will argue in the proof of \cref{Theorem: Every value appears in DiPlace games} that the existence of such a \DiPlace game is sufficient to imply the existence of a \DiPlace game equal to $\game{G}{H}$. Our formal construction is given for games with multiple options for each player. See \cref{Lemma: Gadget Works Lemma} for the value of what we construct. 

For fixed but arbitrary positive integers $k$ and $t$, let $G_1,\dots,G_k$ and $H_1,\dots,H_t$ be \DiPlace games. Let $n$ be a positive integer. We define a new \DiPlace game denoted by $n\langle G_1,\dots, G_k \cgslash H_1,\dots, H_t \rangle$; this game is played on the directed graph $G$. The vertex set of $G$, $V(G)$, is the disjoint union of the vertex sets of the digraphs $G_1,\dots, G_k$ and $H_1,\dots, H_t$ and vertex sets $\{b_1,\dots, b_k\}, \{r_1,\dots, r_t\}$, and $\{x_1,\dots, x_{n}\},\{y_1,\dots, y_{n}\}$. 
As a slight abuse of notation we will write $G_i$ for $V(G_i)$ and
$H_j$ for $V(H_j)$.
The $2$-coloring of $G$, $\phi$, is as follows;
\begin{itemize}
    \item all vertices $v$ in $G_i$ or $H_j$ receive the same colours as they did in $G_i$ or $H_j$, and
    \item for all $1 \leq i \leq k$, $\phi(b_i) = \text{blue}$, and
    \item for all $1 \leq j \leq t$, $\phi(r_j) = \text{red}$, and
    \item for all $1 \leq i \leq n$, $\phi(x_i) = \text{blue}$, and
    \item for all $1 \leq j \leq n$, $\phi(y_j) = \text{red}$.
\end{itemize}
We define the edge set of $G$ as follows. Notice that the resulting digraph will have positive density 
(i.e.~a positive proportion of all arcs are present).
Hence, the description of how vertices are connected is relatively complex. 
To assist the reader we provide, in Figure~\ref{fig: Construction Example},
a simple example of the construction and a technical explanation of the moves in said example.
\begin{enumerate}
    \item for all $i$, $G_i$ induces a copy of $G_i$, and
    \item for all $j$, $H_j$ induces a copy of $H_j$, and
    \item $\{x_1,\dots, x_n\} \cup \{y_1,\dots, y_n\}$ is an independent set, and
    \item  $\{b_1,\dots, b_k\} \cup \{r_1,\dots, y_t\}$ induces a digraph with all possible arcs, and
    \item for all $i$, $b_i$ has an arc to every vertex not in $G_i$, and
    \item for all $j$, $r_j$ has an arc to every vertex not in $H_j$, and    
    \item for any pair $i\neq j$, all possible arcs exist between $G_i$ and $G_j$, and
    \item for any pair $i\neq j$, all possible arcs exist between $H_i$ and $H_j$, and   
    \item for any pair $i\neq j$, all possible arcs exist between $G_i$ and $H_j$, and
    \item for all blue vertices $v$ not in $\{x_1,\dots, x_n\}$, $v$ has an arc to all vertices in $\{x_1,\dots, x_n\}$, and
    \item for all red vertices $v$ not in $\{y_1,\dots, y_n\}$, $v$ has an arc to all vertices in $\{y_1,\dots, y_n\}$, and
    \item for all $i$, $x_i$ has an arc to every vertex not in $\{x_1,\dots, x_n\} \cup \{y_1,\dots, y_n\}$, and
    \item for all $i$, $y_i$ has an arc to every vertex not in $\{x_1,\dots, x_n\} \cup \{y_1,\dots, y_n\}$. 
\end{enumerate}

This describes the \DiPlace game $n\langle G_1,\dots, G_k \cgslash H_1,\dots, H_t \rangle$. Given sets of \DiPlace games $L = \{G_1,\dots, G_k\}$ and $R = \{H_1,\dots, H_t\}$, we let $n\langle L\cgslash R\rangle = n\langle G_1,\dots, G_k \cgslash H_1,\dots, H_t \rangle$.

\begin{figure}[ht]
\begin{center}
\includegraphics[scale = 1.0]{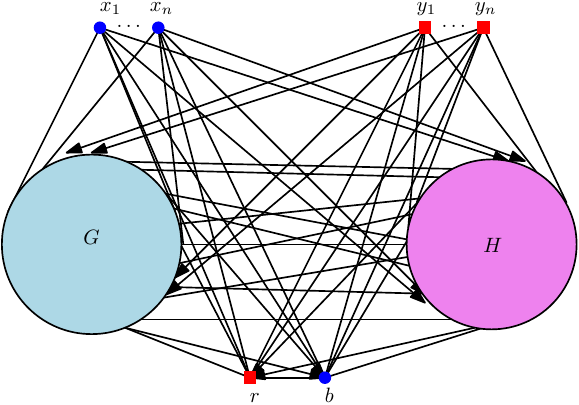}
\end{center}
\caption{A figure displaying the \DiPlace game $n\langle G | H\rangle$.}
\label{fig: Construction Example}
\end{figure}

Since this construction is somewhat opaque on a first reading, and it is critical to the rest of this section,
we do our best to describe the literal form of $n\langle G_1,\dots,G_k|H_1,\dots, H_t\rangle$.
To start we consider $n\langle G|H \rangle $, pictured in Figure~\ref{fig: Construction Example}.
To aid this description we will introduce several other operations, as they assist with bookkeeping.
For a positive integer $n$ and game $G$
we let $n \dos G$ be defined by 
\[
n \dos G = \game{n-1,G^L}{n\dos G^R}.
\]
Similarly, we define 
\[
-n \dos G = \game{-n\dos G^L}{1-n, G^R}.
\]
Now, in $n\langle G|H \rangle $ the Left move in $b$ is a move from $n\langle G|H \rangle$ to $G$.
Left moving in $\{x_1,\dots, x_n\}$ yields a graphs equal to $-1$, with $n-1$ blue vertices, $n$ red vertices, and no edges.
All other Left moves are in the copy of $G$ or the copy of $H$.
The reader can verify that if Left moves in the copy of $G$ to $G^L$, then this moves $n\langle G|H\rangle$ to some $-n\dos G^L$.
By the symmetry of our construction, this means
\[
n\langle G|H \rangle  = \game{G,-n \dos G^L, -n \dos H^L, -1}{1,n \dos G^R, n \dos H^R,H}.
\]
Once the reader is satisfied with this, it is not hard  to see how this form generalizes to $n\langle G_1,\dots,G_k|H_1,\dots, H_t \rangle$

We now proceed to \cref{Lemma: Gadget Works Lemma}.

\begin{lemma}\label{Lemma: Gadget Works Lemma}
    Let $G_1,\dots, G_k$ and $H_1,\dots, H_t$ be \DiPlace games. Then there exists a Digraph Placement game 
    \[G = \game{-1,G_1,\dots, G_k}{1, H_1,\dots, H_t}.\]
\end{lemma}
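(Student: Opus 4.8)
The plan is to analyze the game $G = n\langle G_1,\dots,G_k \cgslash H_1,\dots,H_t\rangle$ directly from its construction and show that, for a suitable choice of $n$ (large enough relative to the $G_i$ and $H_j$), $G = \game{-1,G_1,\dots,G_k}{1,H_1,\dots,H_t}$. I would first enumerate the moves available to each player in $G$. Left can: (i) play $b_i$, which by rules~8, 10, 7, 6 deletes everything except the copy of $G_i$, leaving $G_i$; (ii) play $x_l$, which by rule~13 deletes all blue vertices everywhere and all the $x$'s, leaving a position that is an all-red graph, hence $\le 0$; (iii) play a blue vertex inside some $G_i$ or $H_j$; (iv) play a blue vertex inside the $b/r$-clique. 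By symmetry Right's moves are dual, with $r_j$ leading to $H_j$ and $y_l$ leading to something $\ge 0$.

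The heart of the argument is to show that among Left's options, the moves $b_i$ (giving $G_i$) and $x_l$ (giving a negative game, which I will pin down to be exactly $-1$ for the right choice of $n$, or at least something equal to $-1$ after accounting for leftover structure) \emph{dominate} every other Left move, so that all the internal moves inside the $G_i$, $H_j$, and the clique can be deleted as dominated options; dually for Right. For the clique moves: playing any $b_i$ or $r_j$ when it is \emph{not} the ``matching'' copy being preserved, or when the relevant copy is already gone, collapses almost everything and I would show these are dominated by the $x_l$ move (for Left) since they leave a game $\le 0$ with fewer resources. For moves inside $G_i$: after Left plays inside $G_i$ the opponent still has $b_i$-type and $r_j$-type responses available; I would compare this with simply not having made that move, using Number Translation (\cref{Lemma: Number Translation Theorem}) and the Archimedean principle (\cref{Lemma: Archimedean Principle}) to choose $n$ so large that the $\pm 1$-options (really the $x_l$, $y_l$ moves) swamp any advantage. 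The cleanest way is a \emph{playing proof}: show $G - \game{-1,G_1,\dots,G_k}{1,H_1,\dots,H_t}$ is a previous-player win by exhibiting an explicit second-player strategy --- respond to an opponent's $G_i$ (resp.\ $H_j$, resp.\ $-1$, resp.\ $1$) move on one side by the matching move on the other side, and respond to any ``junk'' move (internal to a component or into the clique) by a move into the $x$- or $y$-vertices that reduces the residual position to something $\le 0$ or $\ge 0$ as needed, then continue as second player there.

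Concretely, the key steps in order: (1) compute $G/b_i \cong G_i$ and $G/r_j \cong H_j$ from rules~6--11; (2) compute $G/x_l$ and show it is a game in which Right has no ``good'' large moves and whose value is $\le 0$, and in fact equals $-1$ for the chosen $n$ --- here the $y_j$ vertices, now isolated red vertices with no out-edges (rule~12, and rule~13 having removed their blue neighbours), supply the integer; symmetrically $G/y_l = 1$; (3) show every remaining Left option is dominated by one of $\{-1,G_1,\dots,G_k\}$ (and dually for Right), which is where the size parameter $n$ and \cref{Lemma: Archimedean Principle,Lemma: Number Translation Theorem} do the work; (4) conclude via the domination theorem quoted in the preliminaries that $G = \game{-1,G_1,\dots,G_k}{1,H_1,\dots,H_t}$, which is a \DiPlace game by construction.

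The main obstacle I expect is step~(3): carefully verifying that the internal moves (inside the $G_i$/$H_j$ and inside the $b/r/x/y$-gadget) are all dominated, because after such a move the resulting \DiPlace position is not literally a component sum --- the surviving $b_i$, $r_j$, $x_l$, $y_l$ vertices are still adjacent to the leftover pieces, so one cannot just read off the value. The fix is to note that any Left move other than $b_i$ or $x_l$ leaves both an $x$-type threat (deleting all remaining blue) and at least one $y$-type vertex available, so Right can immediately reply with $y_l$-type move to reach a position $\ge 0$ that is worse for Left than the option $-1$; making this comparison precise, and checking $n$ is chosen large enough that no clever internal line beats $G_i$ either, is the delicate bookkeeping. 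I would isolate this as a sub-claim (``every option of $G$ not of the stated form is dominated'') and prove it by the playing-proof technique against the candidate dominating option, relying on the fact that the auxiliary vertices were designed precisely to have dominating incentives $\pm 1$.
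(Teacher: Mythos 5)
Your plan is essentially the paper's own proof: take $n\langle G_1,\dots,G_k\cgslash H_1,\dots,H_t\rangle$ with $n$ chosen large via \cref{Lemma: Archimedean Principle}, check $G/b_i \cong G_i$ and $G/x_l = -1$ (isolated $2n{+}1$ blue versus $2n{+}2$ red vertices), and kill every interior blue move by having Right answer with a $y_l$, which collapses the board to a hugely negative integer so that the move is dominated by $-1$; the paper formalizes your step (3) exactly as the playing proof $G/v < (G_i/v) - n - 1 < -1$ using \cref{Lemma: Number Translation Theorem}. The only blemish is the stray ``position $\ge 0$ that is worse for Left'' in your discussion of the $y_l$ reply --- the collapsed position is very negative --- but your ``$\le 0$ or $\ge 0$ as needed'' phrasing makes clear this is a slip rather than a gap.
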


\begin{proof}
    As $G_1,\dots, G_k$ and $H_1,\dots, H_t$ are finite lists of short games, the set $S$ of all games $Q$ such that $Q$ is an option of some game $G_i$ or $H_j$, or the option of an option of some game $G_i$ to $H_j$, is finite. Then by \cref{Lemma: Archimedean Principle} there exists a positive integer $n$ such that for all $Q \in S$,
    \begin{equation*}
    -n < Q < n.
    \end{equation*}
    Observe that any $n$ greater than the order of all graphs $G_1,\dots, G_k$ and $H_1,\dots, H_t$ satisfies the above property.
    Let $n$ be such an integer and let $G \cong (2n+2)\langle G_1,\dots, G_k | H_1,\dots, H_t \rangle$. We claim that 
    \[
    G = \game{-1,G_1,\dots, G_k}{1, H_1,\dots, H_t}.
    \]
    We prove that Left has options to every value in the set $\{-1,G_1,\dots, G_k\}$, and every non-dominated Left option takes a value in the set $\{-1,G_1,\dots, G_k\}$; we also prove that Right has an option to every value in the set  $\{1, H_1,\dots, H_t\}$, and every non-dominated Right option takes a value in the set 
    $\{1, H_1,\dots, H_t\}$. 
    We prove this fact for Left's options; the result for Right's options follows by a similar argument due to the symmetry of our construction.

    Suppose it is Left's turn.
    Let $1\leq i \leq k$ be fixed.
    Observe that $N^+[b_i] = V(G) \setminus V(G_i)$.
    Hence, if Left deletes vertex $b_i$, then all vertices in $G$ except those vertices in $G_i$ are also deleted.
    As $G[V(G_i)] \cong G_i$, Left has an option to $G_i$.
    As $1\leq i \leq k$ was chosen without loss of generality, Left has an option to $G_i$ for all $1\leq i\leq k$.
    
    Observe that for all $1\leq i \leq 2n+2$, 
    $$
    N^+(x_i) = V(G) \setminus (\{x_1,\dots,x_{2n+2}\} \cup \{y_1,\dots,y_{2n+2}\}).
    $$
    Hence, if Left deletes any blue vertex $x_i$, then this leaves a \DiPlace game with $2n+1$ blue vertices, $2n+2$ red vertices, and  no edges. Such a game is equal to $-1$. Hence, Left has an option to $-1$.

    The remaining Left options in $G$ to consider are any moves which delete a blue vertex in $G_i$ for some $1\leq i \leq k$ or $H_j$ for some $1 \leq j \leq t$.
    Let $1\leq i\leq k$ and $1 \leq j \leq t$ be fixed.  
    Notice that if $v \in V(G_i)$, then 
    $$
    V(G) \setminus N^+[v] = (V(G_i) \setminus N^+[v]) \cup \{y_1,\dots,y_{2n+2} \}
    $$
    and that if  $v \in V(H_j)$, then 
    $$
    V(G) \setminus N^+[v] = (V(H_j) \setminus N^+[v]) \cup \{y_1,\dots,y_{2n+2} \}.
    $$ 

    We claim that by our choice of $n$, deleting any blue vertex $v \in V(G_i)$ or $v \in V(H_j)$ is a dominated Left option. Suppose without loss of generality $v \in V(G_i)$. To prove this we will show that 
    \[
    G/v < - 1.
    \]
    In order to show this we will prove that
    \[
    G/v < (G_i/v) - n - 1.
    \]
    Notice that $(G_i/v) - n - 1< -1$ by our choice of $n$. 

    Consider the difference $G/v - G_i/v+n+1$. 
    Notice that $y_1$ is a red vertex and 
    \[
    G/[v,y_1] \cong -2n-1.
    \] 
    Hence, Right moving first by deleting $y_1$ moves the difference to a game equal to $G_i/v - n$, which is negative by our choice of $n$.
    So Right wins $G/v - G_i/v+n+1$ moving first.
    
    Next, we show that all of Left's moves in the difference $G/v - G_i/v+n+1$ are losing. 
    If Left moves $n+1$ to $n$ in the difference, then Right can reply in $y_1$ resulting in a game equal to 
    \[
    -G_i/v - n-1 < -G_i/v - n < 0.
    \]
    since $-n < Q$ for all followers $Q$ of $G$.
    Hence, Left loses moving in the $n+1$ summand.
    As a result, we note that if Left has a winning option in $G/v - G_i/v+n+1$, then Left has a winning option by moving in $G/v$ or by moving in $G_i/v$. 
    Notice that no blue vertex in $G/v$ has a edge pointing to any vertex in the set $\{y_{1},\dots, 7_{2n+2}\}$.
    If Left deletes vertex $u \in V(G/v)$, then Right wins by deleting $y_1$ on their turn because as before this results in a game equal to $G_i/v - n$, which is negative.
    If Left deletes vertex $u \in V(G_i/v)$, then this leaves the game $G/v - G_i/[v,u] + n +1$. Again, Right deletes $y_1$ in response leaving the game 
    $$
    -2n -1 - G_i/[v,u] +n +1 = -G_i/[v,u] - n
    $$
    which is negative since $-n < Q$ for all followers $Q$ of $G$. Hence, Left loses moving first. This implies the desired result that $G/v < -1$. As $v$ was chosen without loss of generality, Left deleting a blue vertex $v \in V(G_i)$ is a dominated Left option for all blue vertices $v \in V(G_i)$ for all $1\leq i \leq k$. Notice that the same argument can be made when $v \in V(H_j)$ is a blue vertex. 

    It follows that Left has options to every value in the set $\{-1,G_1,\dots, G_k\}$, while every non-dominated Left option takes a value in the set $\{-1,G_1,\dots, G_k\}$. As stated earlier, our claim that Right has an option to every value in the set $\{1, H_1,\dots, H_t\}$, while every non-dominated Right option takes a value in the set $\{1, H_1,\dots, H_t\}$ follows by a symmetric argument. This concludes the proof.
\end{proof}

With all of the necessary preliminaries complete we are prepared to prove that \DiPlace is a universal ruleset.

\begin{theorem}\label{Theorem: Every value appears in DiPlace games}
 If $X$ is a short game, then there exists a \DiPlace game $G$, such that $G = X$.
\end{theorem}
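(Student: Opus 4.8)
The plan is to prove \cref{Theorem: Every value appears in DiPlace games} by induction on the birthday of $X$. The base case is $X \cong 0$, which is the \DiPlace game on the empty graph. For the inductive step, suppose $X = \game{X_1^L,\dots,X_k^L}{X_1^R,\dots,X_t^R}$, where each option has strictly smaller birthday. If either $L(X)$ or $R(X)$ is empty then $X$ is an integer, and such games are numbers, so \cref{Lemma: Numbers are digraph games} already handles this case; hence we may assume $k,t \geq 1$. By the induction hypothesis there exist \DiPlace games $G_1,\dots,G_k$ and $H_1,\dots,H_t$ with $G_i = X_i^L$ and $H_j = X_j^R$.

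The key step is to pass from \cref{Lemma: Gadget Works Lemma} — which only gives a \DiPlace game equal to $\game{-1,G_1,\dots,G_k}{1,H_1,\dots,H_t}$, with the two spurious options $-1$ and $1$ — to a \DiPlace game equal to $\game{G_1,\dots,G_k}{H_1,\dots,H_t} = X$. The idea is to shift by a large integer so the spurious options become dominated. Choose an integer $N$ (large enough that $-N < X_i^L, X_j^R < N$ for all $i,j$, which exists by \cref{Lemma: Archimedean Principle}), and first build, via \cref{Lemma: Gadget Works Lemma}, a \DiPlace game $G' = \game{-1,\,G_1 - N,\dots,G_k - N}{1,\,H_1 - N,\dots,H_t - N}$; here each $G_i - N$ and $H_j - N$ is realised as a \DiPlace game by taking the disjunctive sum of $G_i$ (resp.\ $H_j$) with a \DiPlace game equal to $-N$, which exists by \cref{Lemma: Numbers are digraph games}, and a disjoint union of digraphs realises the disjunctive sum. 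Now by the choice of $N$ we have $G_i - N < -1$ and $H_j - N > 1$, so the Left option $-1$ dominates every $G_i - N$ and the Right option $1$ dominates every $H_j - N$; thus $G' = \game{-1}{1} = \cgstar$... which is not what we want. Instead, reverse the roles: we want the spurious options $-1,1$ to be dominated. So we should instead realise $G_i + N$ and $H_j + N$, i.e.\ build $G'' = \game{-1,\,G_1 + N,\dots,G_k+N}{1,\,H_1+N,\dots,H_t+N}$; then $G_i + N > -1$ and $H_j + N < 1$, so $-1$ is a dominated Left option and $1$ is a dominated Right option, giving $G'' = \game{G_1+N,\dots,G_k+N}{H_1+N,\dots,H_t+N}$. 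By \cref{Lemma: Number Translation Theorem} this equals $\game{G_1,\dots,G_k}{H_1,\dots,H_t} + N = X + N$. Finally, take the disjunctive sum of $G''$ with a \DiPlace game equal to $-N$ (again from \cref{Lemma: Numbers are digraph games}), realised as a disjoint union of digraphs; the result is a \DiPlace game equal to $X + N - N = X$, completing the induction.

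The main obstacle is the bookkeeping around dominated options: one must be careful that after translating by $N$, the \emph{only} options that survive domination are exactly the $G_i + N$ (resp.\ $H_j + N$), and that \cref{Lemma: Gadget Works Lemma} as stated already guarantees every non-dominated Left option of the gadget has value in $\{-1, G_1+N,\dots\}$ — so no new bad options are introduced by the construction itself. One also needs the elementary facts that $x < -1 \iff x+N < N-1$ is not quite the right inequality; rather, $-N < X_i^L$ is equivalent to $X_i^L + N > 0 > -1$, which is exactly the inequality making $-1$ dominated, so a clean choice of $N$ (for instance, any $N$ exceeding the orders of all the $G_i, H_j$, as in the proof of \cref{Lemma: Gadget Works Lemma}, or simply invoking \cref{Lemma: Archimedean Principle}) suffices. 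The remaining ingredients — that disjoint unions of \DiPlace digraphs realise disjunctive sums, and that numbers are available as \DiPlace games — are already established, so the argument assembles routinely once the translation trick is set up correctly.
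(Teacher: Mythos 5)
Your overall architecture is the same as the paper's (induction on birthday, numbers from \cref{Lemma: Numbers are digraph games}, the gadget of \cref{Lemma: Gadget Works Lemma}, then a translation to neutralise the spurious options $-1$ and $1$), but the translation step contains a genuine error. You assert that for $N$ large enough that $-N < X^L_i, X^R_j < N$, one has both $G_i + N > -1$ and $H_j + N < 1$. The first inequality holds, but the second does not: $-N < H_j$ only gives $H_j + N > 0$, and for large $N$ the game $H_j + N$ will typically exceed $1$. Translating upward thus fixes the Left side while wrecking the Right side: the spurious Right option $1$ ends up \emph{dominating} the genuine options $H_j+N$ instead of being dominated by them. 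Concretely, take $X = \game{2}{0}$ (a non-number in canonical form) and $N=3$. Your construction produces a game equal to $\game{-1,5}{1,3}$; here $-1$ is a dominated Left option and $3$ is a dominated Right option (Right prefers $1$), so the value is $\game{5}{1}$, whereas $X + 3 = \game{5}{3}$, and $\game{5}{1} < \game{5}{3}$. Subtracting $N$ afterwards therefore does not recover $X$. No single translation can make both $-1$ and $1$ dominated in general, which is exactly the obstruction your "translation trick" was meant to dispose of.

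The paper's proof circumvents this asymmetrically. It first checks whether the untranslated gadget $\game{-1,G_1,\dots,G_k}{1,H_1,\dots,H_t}$ already equals $X$; if not, it shows that (without loss of generality) the failure of $1$ to be dominated by a Right option forces $X > 1$, takes $n$ minimal with $X < n$, and translates \emph{down} by $n-1$. After that translation, $-1$ is dominated because $X - n + 1$ is Left-win or next-player-win, while the spurious option $1$ is disposed of not by domination but by a playing argument: since $X - n + 1 < 1$, Right's move to $1$ in the difference game loses outright. Only then is $n-1$ added back. So the direction and size of the translation must be dictated by $X$ itself, and one of the two spurious options has to be killed by a reversibility/playing argument rather than by domination. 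Your proposal is missing this case analysis, and as written the inductive step fails on games such as $\game{2}{0}$ or $\game{3}{1}$.
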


\begin{proof}
    Let $X$ be a short game. By \cref{Lemma: Numbers are digraph games} if $X$ is a short number, then there exists a $G = X$. Suppose then that $X$ is not a number. Then $X$ has a Left option and a Right option, given that the only games without options for both players are integers. 

    Suppose without loss of generality that $X$ is in canonical form and 
    $$
    X = \game{Y_1,\dots,Y_k}{Z_1,\dots,Z_t}.
    $$
    Every option of $X$ has a birthday smaller than $X$. As $X$ has an option for both players and $X$ is in canonical form, $X \neq 0$. Hence, the birthday of $X$, say  $b$, is strictly positive. Suppose for the sake of contradiction that $X$ is not equal to any \DiPlace game, and $b$ is the least integer such that a game $X$ with birthday $b$ not equal to any \DiPlace game.

    By this assumption for all $1\leq i\leq k$ and $1\leq j \leq t$, there exists a \DiPlace game $G_i = Y_i$ and a \DiPlace game $H_j = Z_j$. By \cref{Lemma: Gadget Works Lemma} there exists a \DiPlace game $G'$ which satisfies that
    \[
    G' = \game{-1,G_1,\dots, G_k}{1, H_1,\dots, H_t}.
    \]
    If $G' = X$, then we have contradicted our assumption that no \DiPlace game equals $X$. Suppose then that $G' \neq X$.

    If $G' \neq X$, then either $-1$ is not dominated by a Left option of $X$ or $1$ is not dominated by a Right option of $X$. Observe that at least one of these options must be dominated,  as otherwise, $X = 0$ contradicting our assumption that $X$ is not a number. Without loss of generality consider the case where $-1$ is dominated by a Left option of $X$ but $1$ is not dominated by a Right option of $X$.

    Now consider the difference, $G' - X$. Right wins going second by following a mirroring strategy. Similarly, Right loses moving first unless Right takes their option to $1$ in $G'$. By our assumption that $G'\neq X$, it follows that $1 - X \leq 0$, given that we have shown all other moves are losing for either player. Given our assumption that $X$ is not a number, this implies that $X > 1$. 

    We let $n$ be the smallest integer such that $X < n$, this is defined due to \cref{Lemma: Archimedean Principle}.
    As $X > 1$, we conclude that $n \geq 2$.
    Then, by the minimality of $n$, $X = n -1$ or $n -1 \cglfuz X$. But we have assumed that $X$ is not an integer, so $X \neq n - 1$ implying that $n -1 \cglfuz X$.
    Hence, $0 \cglfuz X - n + 1 < X$ and as a result $X - n + 1$ is either Left win, or next player win. 
    That is, Left going first has an option in $X - n + 1$ which is at least $0$ and so  dominates $-1$.
    We note for future consideration that $X - n + 1 < 1$ given $(X - n + 1) - 1 = X - n < 0$ by our choice of $n$.
    
    We claim that there exists a \DiPlace game $G'' = X - n + 1$. 
    By assumption $X$ is not a number, so
    \cref{Lemma: Number Translation Theorem} implies that
    $$
    X - n + 1 = \game{Y_1-n+1,\dots,Y_k-n+1}{Z_1-n+1,\dots,Z_t-n+1}.
    $$
    As $1-n$ is a number, there exists a \DiPlace game equal to $1-n$, hence, for all $1\leq i \leq k$ and $1\leq j \leq t$, there exists a \DiPlace game equal to $G_i - n +1 = Y_i - n + 1$ and a \DiPlace game equal to $H_j - n +1 = Z_j - n + 1$ formed by taking disjoint unions of \DiPlace games. Thus, by \cref{Lemma: Gadget Works Lemma} there exists a \DiPlace game $G''$ such that 
\begin{eqnarray*}
    G'' & = & \game{-1,G_1-n+1,\dots,G_k-n+1}{1,H_1-n+1,\dots,H_t-n+1}\\
    & = & \game{-1,Y_1-n+1,\dots,Y_k-n+1}{1,Z_1-n+1,\dots,Z_t-n+1}.
\end{eqnarray*}
    We claim that $G'' = X - n +1$.

    Recall that Left has an option in $X-n+1$ which dominates $-1$. So $-1$ is dominated as a Left option in $G''$. Thus, $G'' = H$ where 
 \begin{eqnarray*}
    H & = & \game{Y_1-n+1,\dots,Y_k-n+1}{1,Z_1-n+1,\dots,Z_t-n+1}.
\end{eqnarray*}   
    Consider the difference $H - (X-n+1)$. As before, a mirroring strategy implies that Left will loses moving first in this difference, while a mirroring strategy also implies that Right will lose moving first, unless they take their option to $1$ in $H$. Suppose that Right moves first and takes their option in $H$ to $1$. Then the resulting game is of the form
\begin{eqnarray*}
    1 - (X - n +1) & = & 1 - X + n  -1\\
    & = & n - X > 0
\end{eqnarray*}
    by our choice of $n$.
    So Right also loses if they move first in $H - (X - n +1)$. Hence, $H - (X-n+1)$ is previous player win, which implies that $G'' = H = X - n +1$. By \cref{Lemma: Numbers are digraph games} there exists a \DiPlace game equal to $n-1$, hence there exists a \DiPlace game, call it $G = G'' + n - 1 = (X - n+1) + n - 1 = X$ formed by taking the disjoint union of $G''$ and a \DiPlace game equal to $n-1$. As this contradicts our assumption that $X$ is not equal to a \DiPlace game, this assumption must be false, so $X$ is equal to a \DiPlace game.
    
    The case where $1$ is dominated by a Right option of $X$ but $-1$ is not dominated by a Left option of $X$ leads to a contradiction by a symmetric argument. Here symmetric refers to switching the roles of the players. Given $X$ was chosen to be a minimal counter-example, we conclude that there exists no counter-example. This complete the proof.  
\end{proof}

\section{Constructing Games Values in Smallest Graphs}\label{Section:Extremal}

Now that we know \DiPlace is a universal ruleset, as shown in \cref{Theorem: Every value appears in DiPlace games}, we can consider the matter of which values are hardest to construct in \DiPlace.
Given a game $X$, let $f(X)$ be the least integer such that there exists a \DiPlace game $G$ with $f(X)$ vertices where $G = X$.
In this section we provide bounds on the function $f(X)$ for general $X$.

We let $\mathbb{G}_b$ be the set of values born by day $b$, $g(b) = \card{\mathbb{G}_b}$, and we let 
\[
F(b) = \max_{X\in \mathbb{G}_b} f(X).
\]
Next let $a(b)$ be the cardinality of a largest subset $S \subseteq \mathbb{G}_b$ such that for all $X,Y \in S$, $X \not \leq Y$ and $X \not\geq Y$; that is, $a(b)$ is the cardinality of the largest \emph{antichain} of games born by day $b$. Observe that by containment, $a(b) \leq g(b)$.

Our upper bound comes from the construction used in \cref{Section:Universal}. Notably, even for a number of small games our construction does not minimize the number of vertices in a graph $G = X$. See \cref{Appendix:Day2}.
Do to this, we believe the upper bound we give here can be significantly improved.
Our lower bounds are derived from some standard combinatorial and graph theoretic arguments.

We begin by analysing the number of vertices used in the construction given to prove \cref{Theorem: Every value appears in DiPlace games}.

\begin{lemma}\label{Lemma: F(b) upper bound in terms of b and a(b)}
    Let $b\geq 0$.
    Then 
    \[
    F(b+1) \leq 2a(b)(F(b)+b+1) + 5b + 8.
    \]
\end{lemma}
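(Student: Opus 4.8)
The plan is to bound $F(b+1)$ by analysing the construction used in \cref{Theorem: Every value appears in DiPlace games} and counting vertices carefully. Fix $X \in \mathbb{G}_{b+1}$. By the canonical form theory, $X$ is equal to a game in canonical form born by day $b+1$, so its Left options $Y_1,\dots,Y_k$ and Right options $Z_1,\dots,Z_t$ are all born by day $b$; moreover, since canonical forms have no dominated options, the sets $\{Y_1,\dots,Y_k\}$ and $\{Z_1,\dots,Z_t\}$ are each antichains of games born by day $b$, so $k \leq a(b)$ and $t \leq a(b)$. First I would handle the number case: if $X$ is a number born by day $b+1$, then $X$ is a dyadic rational of a controlled size, and a short \Hackenbush stalk (hence \Poset game, hence \DiPlace game via \cref{Theorem: Conflict Placement equiv Digraph}) realising it has at most $b+1$ edges/vertices, which is comfortably below the claimed bound; I would dispatch this separately so the main argument concerns non-numbers.

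For non-numbers, the proof of \cref{Theorem: Every value appears in DiPlace games} shows $X = G'$ where $G' \cong (2n+2)\langle G_1,\dots,G_k \cgslash H_1,\dots,H_t\rangle$, or $X = G'' + (n-1)$ where $G''$ is built the same way from the translated options $Y_i - n + 1$ and $Z_j - n + 1$, with $n$ the least integer with $X < n$; by the birthday/Archimedean bound one may take $n \leq b+1$ (indeed $\mathbb{G}_{b+1}$ sits inside $(-(b+1), b+1)$ roughly). The key step is to tally the vertices of $(2n+2)\langle \cdot \cgslash \cdot\rangle$: it consists of disjoint copies of the $G_i$ and $H_j$ (contributing at most $(k+t)\cdot F(b) \leq 2a(b)F(b)$ vertices, taking each realiser to be smallest), plus the $k$ vertices $b_i$, the $t$ vertices $r_j$, and $2(2n+2) = 4n+4$ vertices $x_i, y_i$. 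So the gadget on the \emph{untranslated} options uses at most $2a(b)F(b) + 2a(b) + 4n + 4$ vertices. In the translated case, each $G_i - n + 1$ is realised by a disjoint union of a smallest realiser of $G_i$ (at most $F(b)$ vertices) and a smallest \DiPlace realiser of $1 - n$ (a \Hackenbush/\Poset stalk on at most $n - 1 \leq b$ vertices), and likewise for $H_j - n + 1$; plus forming $G = G'' + (n-1)$ adds another realiser of $n - 1$ on at most $b$ vertices. Collecting: at most $2a(b)(F(b) + b) + 2a(b) + (4n+4) + b$ vertices, and substituting $n \leq b+1$ gives $2a(b)(F(b)+b) + 2a(b) + 4b + 8 + b = 2a(b)(F(b)+b+1) + 5b + 8$, which is the claimed inequality. (The precise choice of which upper estimate on $n$ to use — and whether to fold the $+1$ in $F(b)+b+1$ from the $2a(b)$ term or from rounding $n$ — is exactly the routine bookkeeping I would not belabour; the displayed constants are tuned to make it come out.)

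The main obstacle, and the step demanding the most care, is the interplay between \emph{value} and \emph{vertex count}: $f$ is defined on values, but the construction feeds in specific realisers $G_i = Y_i$, so I must consistently take each $G_i$ to be a smallest \DiPlace realiser of $Y_i$ (giving $\card{V(G_i)} = f(Y_i) \leq F(b)$) and must make sure the translated realisers $G_i - n + 1$ really can be built as a disjoint union of a smallest realiser of $Y_i$ and a small stalk for $1-n$ without a blow-up — this is where the $+b$ inside $F(b)+b+1$ comes from. A secondary point is justifying $n \leq b+1$: I would invoke \cref{Lemma: Archimedean Principle} together with the fact that every game born by day $b+1$ lies strictly between $-(b+1)$ and $b+1$ (a standard consequence of the simplicity rule / birthday bounds, provable by an easy induction), so the least $n$ with $X < n$ satisfies $n \leq b+1$. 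One must also confirm the edge cases $k = 0$ or $t = 0$ do not arise here (they do not, since $X$ is not a number) and that $b \geq 0$ makes all the stalk sizes nonnegative. With those pinned down, the inequality follows by the straightforward vertex count sketched above.
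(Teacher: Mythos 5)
Your proposal is correct and follows essentially the same route as the paper: it analyses the gadget $(2n+2)\langle\cdot\cgslash\cdot\rangle$ from the universality construction, bounds the number of non-dominated options of a canonical form born on day $b+1$ by $a(b)$, realises each translated option as a smallest realiser plus an integer stalk of at most $b$ vertices, and takes $n\leq b+1$, arriving at the identical vertex tally $2a(b)(F(b)+b)+2a(b)+4b+8+b$. The only (harmless) difference is that you dispatch the number case explicitly, which the paper leaves implicit.
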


\begin{proof}
    For an integer $n$, digraph games $G_1,\dots, G_k$, and $H_1,\dots, H_t$, the game \newline $n\langle G_1,\dots,G_k\cgslash H_1,\dots,H_t\rangle$ has exactly
    \[
    2n + k + t + \sum_{1 \leq i \leq k} \card{V(G_i)} + \sum_{1 \leq j \leq t} \card{V(H_j)}
    \]
    vertices. By \cref{Lemma: Gadget Works Lemma} and the proof of \cref{Theorem: Every value appears in DiPlace games}, for  all games of the form 
    $X = \game{Y_1,\dots, Y_k}{Z_1,\dots, Z_t}$, either
\begin{align*}
    X  & = (2m+2) \langle G_1,\dots, G_k\cgslash H_1,\dots, H_t \rangle
\end{align*}
or 
\begin{align*}
    X  & = (2m+2) \langle G_1-m+1,\dots, G_k-m+1\cgslash H_1 - m+1,\dots, H_t-m+1 \rangle + m - 1,
\end{align*}
    where for all options of $X$ and all options of options of $X$, $Q$, we have $-m < Q < m$.
    
    Here for all $1 \leq i \leq k$, $G_i = Y_i$ and for all $1\leq j \leq t$, $H_j = Z_j$
    where $G_1,\dots, G_k$ and $H_1,\dots, H_t$ are all \DiPlace games. Observe that for every option $X'$ of $X$, $f(X'-m+1) \leq f(X') + m -1 \leq F(b) + m - 1$. Assume without loss of generality that for all options $X'$ of $X$, we make $X-m+1$ as a \DiPlace game with $f(X')+m-1$ vertices.
    Then the second form of $X$ above contains more vertices than the first, so we can assume that $X$ is of the second form without loss of generality. 

    Suppose without loss of generality that $X = \game{Y_1,\dots, Y_k}{Z_1,\dots, Z_t}$ is born on day $b+1$ and is in canonical form. Then $X$ has no dominated options and every option of $X$ is born by day $b$. Hence, removing dominated options from $\{-1,G_1-m+1,\dots, G_k-m+1\}$ to form a set $L$ and removing dominated options from $\{-1,H_1-m+1,\dots, H_t-m+1\}$ to form a set $R$, notice that $X  = G \cong (2m+2)\langle L\cgslash R\rangle - m + 1$ and $\card{L},\card{R} \leq a(b)$. Furthermore, we note that for all games $Z$ born by day $b$, $-b-1 < Z < b+1$, so we may choose $m = b+1$. 

    Hence,
    \begin{align*}
        f(X) & \leq 4b + 8 + 2a(b) + a(b)(F(b) + b) + a(b)(F(b)+b) + b \\
        & = 2a(b)(F(b)+b+1) + 5b + 8.
    \end{align*}
    This concludes the proof.
\end{proof}

We have the following useful lower bound of $F(b)$.

\begin{lemma}[Lemma~9 \cite{wolfe2004counting}]
\label{Lemma: Lower bound on g(b)}
    For all $b\geq 0$, 
    \[
    g(b+1) \geq g(b)^2.
    \]
\end{lemma}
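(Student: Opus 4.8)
\textbf{Proof proposal for Lemma~\ref{Lemma: Lower bound on g(b)} ($g(b+1) \geq g(b)^2$).}

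The plan is to exhibit an injection from ordered pairs of games born by day $b$ into games born by day $b+1$, which immediately gives $g(b)^2 \leq g(b+1)$. The natural candidate is the map $(A,B) \mapsto \game{A}{B}$ for $A, B \in \mathbb{G}_b$: since every option of $\game{A}{B}$ is born by day $b$, the game $\game{A}{B}$ is born by day at most $b+1$, hence its value lies in $\mathbb{G}_{b+1}$. The heart of the matter is to show this assignment is well defined on \emph{values} (not merely literal forms) and injective, i.e.\ that if $A=A'$ and $B=B'$ then $\game{A}{B}=\game{A'}{B'}$ (so the map descends to values of the pair), and conversely that $\game{A}{B}=\game{A'}{B'}$ forces $A=A'$ and $B=B'$ when all four games are born by day $b$.

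First I would record the easy direction: $\game{\cdot}{\cdot}$ respects equality of options, which is a standard fact (replacing an option by an equal game does not change the value of a position). This means we get a well-defined function $\mathbb{G}_b \times \mathbb{G}_b \to \mathbb{G}_{b+1}$. Next, for injectivity, suppose $\game{A}{B} = \game{A'}{B'}$ with $A,A',B,B' \in \mathbb{G}_b$. The key tool is the characterisation of when two games with few options are equal: I would use the fact (derivable from the simplicity theorem / canonical form theory cited as Theorem~2.7 and Theorem~2.9 of Chapter~\MakeUppercase{\romannumeral 2} of \cite{siegel2013combinatorial}) that for a game $\game{A}{B}$ where $A$ does not dominate and is not reversible, the canonical form retains $A$ as its unique Left option, and likewise for $B$ on the Right; so the literal canonical form of $\game{A}{B}$ determines $A$ and $B$ up to value. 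The obstruction is precisely the possibility of \emph{reversibility or domination} collapsing the form: e.g.\ if $A$ is reversible through some $A^R$ with $A^R \leq \game{A}{B}$, then $\game{A}{B}$ might equal a simpler game and the correspondence with the pair $(A,B)$ breaks.

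To get around this I would not try to make $\game{\cdot}{\cdot}$ injective on all pairs, but instead restrict to a well-chosen subset of size $g(b)^2$. The cleanest route, and I expect this is the intended one, is: enumerate $\mathbb{G}_b = \{V_1, \dots, V_{g(b)}\}$ by canonical forms, and for each ordered pair $(i,j)$ form $\game{V_i}{V_j}$; then argue that these $g(b)^2$ games are pairwise distinct in value. For distinctness, compare $\game{V_i}{V_j}$ with $\game{V_{i'}}{V_{j'}}$: if the Left options differ in value, $V_i \neq V_{i'}$, then WLOG $V_i \not\leq V_{i'}$; playing the difference $\game{V_i}{V_j} - \game{V_{i'}}{V_{j'}}$, Left moving first goes to $V_i - \game{V_{i'}}{V_{j'}}$, and one shows Right cannot survive there because $V_i \not\leq V_{i'}$ gives Left a good continuation (Right's response $-V_{i'}$ or $V_{i'}$'s structure is controlled since $V_{i'}$, $V_{j'}$ are simpler). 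The main obstacle is handling the case where the option sets have the \emph{same} set of values but the games still might coincide only because of reversibility — but by taking $V_i, V_j$ in canonical form and invoking that canonical Left/Right options are non-reversible and non-dominated, $\game{V_i}{V_j}$ is itself essentially in canonical form (or its canonical form still recovers $V_i, V_j$), so equality of the games forces $\{V_i\} = \{V_{i'}\}$ and $\{V_j\}=\{V_{j'}\}$ as option sets, i.e.\ $i=i'$ and $j=j'$. This yields $g(b+1) \geq g(b)^2$, completing the proof; I would cite \cite{wolfe2004counting} for the original argument and sketch only this injection.
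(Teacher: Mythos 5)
First, a structural note: the paper does not prove this lemma at all --- it is imported verbatim as Lemma~9 of \cite{wolfe2004counting} --- so there is no internal argument to compare yours against; what follows is an assessment of your proof on its own terms.

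Your proposed injection $(A,B)\mapsto\game{A}{B}$ from $\mathbb{G}_b\times\mathbb{G}_b$ into $\mathbb{G}_{b+1}$ is not injective on values, and the patch you offer (take $A,B$ in canonical form, so that $\game{A}{B}$ is ``essentially in canonical form'') does not repair it. Whether the Left option $A$ of $G=\game{A}{B}$ is reversible depends on whether some Right option of $A$ satisfies $A^R\leq G$; this is a relation between $A$ and the compound game $G$, and it is in no way prevented by $A$ itself being canonical. The map already collapses on day $1$: with $\mathbb{G}_1=\{0,1,-1,\cgstar\}$, a direct check (in each case the first player's only moves lose) gives
\[
\game{-1}{1}\;=\;\game{-1}{\cgstar}\;=\;\game{\cgstar}{1}\;=\;\game{\cgstar}{\cgstar}\;=\;0,
\]
so at least four of the sixteen pairs share a value and your construction certifies at most $13<16=g(1)^2$ elements of $\mathbb{G}_2$. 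The same example refutes the ``play the difference'' step: for $(V_i,V_j)=(-1,1)$ and $(V_{i'},V_{j'})=(-1,\cgstar)$ we have $V_j=1\not\leq\cgstar=V_{j'}$, yet $\game{V_i}{V_j}-\game{V_{i'}}{V_{j'}}=0$ is a second-player win, so the excursion you describe for the first player does not succeed. What your argument actually establishes is only that $g(b+1)$ is at least the number of distinct values of the form $\game{A}{B}$ with $A,B\in\mathbb{G}_b$, which, as the example shows, can be strictly smaller than $g(b)^2$. A correct proof must select $g(b)^2$ witnesses in $\mathbb{G}_{b+1}$ for which both domination and, crucially, reversibility are provably absent --- this is precisely the extra care taken in Wolfe's Lemma~9 --- so the missing idea is a construction (or a restriction of the pairs) that defeats reversibility, not an appeal to canonical forms of the options.
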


\begin{lemma}\label{Lemma: bounds on the number of non-isomorphic digraphs on n vertices}
Let $n$ be a fixed integer and let $G(n)$ equal the number of distinct game values  equal to a \DiPlace game on at most $n$ vertices. Then for all $n\geq 2$,
\[
\log_2(G(n)) \leq \log_2(n) + n^2.
\]
\end{lemma}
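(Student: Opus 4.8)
The plan is to count distinct digraphs (with a 2-coloring of the vertices) on at most $n$ vertices, and observe that the number of \DiPlace game values realizable on at most $n$ vertices is bounded above by this count, since each \DiPlace game $G = (V,E,\phi)$ determines a single value. First I would fix a labeled vertex set $\{1,\dots,m\}$ for some $m \leq n$. A directed graph on $m$ labeled vertices is determined by, for each ordered pair $(i,j)$ with $i \neq j$, whether $(i,j) \in E$; there are $m(m-1)$ such ordered pairs, so there are at most $2^{m(m-1)}$ digraphs on $m$ labeled vertices. (Loops are irrelevant in \DiPlace, since deleting $v$ already deletes $v$; so we need not count them.) The 2-coloring $\phi$ contributes a further factor of $2^m$. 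Summing over $1 \leq m \leq n$, the number of labeled colored digraphs on at most $n$ vertices is at most $\sum_{m=1}^{n} 2^{m(m-1)+m} = \sum_{m=1}^{n} 2^{m^2} \leq n \cdot 2^{n^2}$.

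Since every \DiPlace game on at most $n$ vertices is isomorphic to one on a labeled vertex set of size at most $n$, and isomorphic (colored) digraphs yield equal — indeed, literally identical — game values, we get $G(n) \leq n \cdot 2^{n^2}$. Taking $\log_2$ of both sides gives $\log_2(G(n)) \leq \log_2(n) + n^2$, as claimed. The only mild subtlety is making sure the arithmetic of the exponent lands exactly on $n^2$ rather than something like $n^2 + n$: this is why one should count the edge choices as $m(m-1)$ ordered non-diagonal pairs and fold the coloring factor $2^m$ together with $2^{m(m-1)}$ to get exactly $2^{m^2}$, and then bound the sum $\sum_{m=1}^n 2^{m^2}$ by its largest term times the number of terms, namely $n \cdot 2^{n^2}$.

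I do not expect a serious obstacle here; the statement is essentially an entropy-style upper bound and the proof is a direct enumeration. The one place to be careful is the claim that distinct values are in bijection with nothing finer than (isomorphism classes of) colored digraphs — we only need the trivial direction, that each colored digraph gives at most one value, so the count of colored digraphs is a valid upper bound for $G(n)$; we make no claim that different digraphs give different values. A secondary point worth stating explicitly in the write-up is that a \DiPlace game on fewer than $n$ vertices is subsumed by the sum over $m \leq n$, so ``at most $n$ vertices'' is handled without extra work.
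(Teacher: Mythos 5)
Your proposal is correct and follows essentially the same route as the paper's proof: bound $G(n)$ by the number of labeled $2$-colored digraphs, counting $2^{m(m-1)}$ edge choices times $2^m$ colorings to get $2^{m^2}$ per vertex count $m$, and bound the total by $n\cdot 2^{n^2}$ before taking logarithms. Your version is if anything slightly more careful, since you make the sum over $m\leq n$ explicit where the paper simply multiplies its exact-$n$ count by $n$.
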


\begin{proof}
Observe that each \DiPlace game is equal to exactly one game value. Hence, $G(n)$ is at most the number of \DiPlace games on at most $n$ vertices. Observe that there are exactly 
$$
2^{2\binom{n}{2}} = 2^{n(n-1)}
$$
labeled directed graphs on $n$ vertices without repeated edges of the same orientation. Next, observe that for a fixed labeled directed graph $G$ on $n$ vertices, there are $2^n$, $2$-colorings of the vertices $G$. Hence, there are at most 
\[
2^n \cdot 2^{n(n-1)} = 2^{n^2}
\]
\DiPlace games with exactly $n$ vertices. 
Noting that the \DiPlace game with no vertices and the two vertex \DiPlace game with $1$ blue vertex, and $1$ red vertex, and no edges, are both equal to $0$, we conclude that for $n\geq 2$
\[
G(n) \leq \Big(1+\sum_{m=1}^n m2^{m^2}\Big)-1 \leq n2^{n^2}
\]
where the plus $1$ covers the \DiPlace game with no vertices, and the minus $1$ covers the double count of the at least two \DiPlace 
games on at most $n$ vertices 
having value $0$.
This implies that 
\[
\log_2(G(n)) \leq \log_2(n) + n^2
\]
as required.
\end{proof}

We now can and do bound $F(b)$ below by a function of $b$.

\begin{theorem}\label{Theorem: F(b) lower bound in birthday}
    For all $b \geq 2$,
    $$
    F(b) \geq \lfloor 2^{b/2} \rfloor.
    $$
\end{theorem}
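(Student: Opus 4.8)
The plan is to combine the two counting lemmas that precede the statement. On the one hand, \cref{Lemma: Lower bound on g(b)} tells us that the number of distinct game values born by day $b$ satisfies $g(b+1) \geq g(b)^2$, and iterating this from a suitable base case forces $g(b)$ to grow like a power tower in $b$. On the other hand, \cref{Lemma: bounds on the number of non-isomorphic digraphs on n vertices} says that the number $G(n)$ of values realised by \DiPlace games on at most $n$ vertices satisfies $\log_2 G(n) \leq \log_2 n + n^2$. Since \DiPlace is universal (\cref{Theorem: Every value appears in DiPlace games}), every value born by day $b$ is realised by some \DiPlace game on at most $F(b)$ vertices, so $g(b) \leq G(F(b))$. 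Chaining these gives a lower bound on $F(b)$: roughly, $n = F(b)$ must be large enough that $n^2 + \log_2 n \geq \log_2 g(b)$, and since $\log_2 g(b)$ is itself roughly a power tower of height $b-1$, taking a square root only drops the height of the tower by a bounded amount, yielding $F(b) \geq 2[b-1]$.

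Concretely, first I would pin down a base case for the $g$-recursion. One checks directly that $g(2)$ is large enough — there are enough values born by day $2$ (e.g.\ $0, *, 1, -1, \tfrac12, \ldots$ and more, see \cref{Appendix:Day2}) that $g(2) \geq 2^{2} $, say, or whatever small explicit constant makes the induction start cleanly; I would verify $\log_2 g(2) \geq 2[1] = 2$ with a little room to spare. Then by \cref{Lemma: Lower bound on g(b)}, $\log_2 g(b+1) \geq 2 \log_2 g(b)$, so $\log_2 g(b)$ at least doubles each day; more usefully, writing $\ell(b) = \log_2 g(b)$ we get $\ell(b) \geq 2^{b-2}\ell(2)$, and then feeding this through one more level — combining with the fact that $g$ values themselves count games, whose option sets can be chosen freely — one extracts the tower growth $\log_2 g(b) \geq 2[b-1]^{\text{(ish)}}$. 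I would set up the exact induction so that the constants line up with the target $F(b) \geq 2[b-1]$.

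The main step is then the inversion. From $g(b) \leq G(F(b))$ and \cref{Lemma: bounds on the number of non-isomorphic digraphs on n vertices}, with $n := F(b)$,
\[
\log_2 g(b) \;\leq\; \log_2 n + n^2 \;\leq\; 2n^2
\]
(for $n \geq 1$), so $n \geq \sqrt{\tfrac12 \log_2 g(b)}$. Thus it suffices to show $\sqrt{\tfrac12 \log_2 g(b)} \geq 2[b-1]$, i.e.\ $\log_2 g(b) \geq 2 \cdot 2[b-1]^2 = 2^{1 + 2 \cdot 2[b-2]}$. This is where the power-tower bound on $g(b)$ from the previous paragraph gets used: since $\log_2 g(b)$ is at least a tower of height $b-1$ in base roughly $2$, it comfortably dominates $2[b-1]^2$, which is only a tower of height $b-1$ squared (squaring a tower merely multiplies the top exponent by $2$). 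I would make this comparison precise by an induction on $b$, carrying a slightly stronger hypothesis (e.g.\ $\log_2 g(b) \geq 2[b]$ rather than just $\geq 2[b-1]^2$) so that the square-root and the squaring in the target are both absorbed.

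The hard part will be bookkeeping the constants and the exact heights of the towers through the two applications of the recursion — one to grow $g$, one to invert the digraph count — so that the final bound is exactly $2[b-1]$ and not $2[b-2]$ or $2[b-1]/2$; in particular, checking that the base case $b = 2$ really does give $F(2) \geq 2[1] = 2$ and that the inductive step does not lose a level of the tower. I expect the cleanest route is to prove the auxiliary claim $\log_2 g(b) \geq 2[b]$ (or a similar clean statement) first by induction using \cref{Lemma: Lower bound on g(b)} and the day-$2$ base case, and then derive $F(b) \geq 2[b-1]$ from it in one shot via \cref{Lemma: bounds on the number of non-isomorphic digraphs on n vertices}.
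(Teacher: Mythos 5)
Your overall strategy is the same as the paper's: observe $g(b) \leq G(F(b))$, lower-bound $g(b)$ by iterating \cref{Lemma: Lower bound on g(b)} from a small base case, and invert the bound $\log_2 G(n) \leq \log_2 n + n^2$ of \cref{Lemma: bounds on the number of non-isomorphic digraphs on n vertices} to get $F(b) \gtrsim \sqrt{\log_2 g(b)}$. The inversion step and the day-$1$/day-$2$ base case are fine.

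The gap is exactly where you flag it: passing from $\ell(b) = \log_2 g(b) \geq 2^{b-2}\,\ell(2)$ to ``tower growth $\log_2 g(b) \geq 2[b-1]$(ish).'' Iterating $g(b+1) \geq g(b)^2$ from $g(1) = 4$ gives only $g(b) \geq 4^{2^{b-1}} = 2^{2^{b}}$, i.e.\ $\log_2 g(b) \geq 2^{b}$, which is doubly exponential in $b$, not a tower of height growing with $b$. Feeding that through the square root yields only $F(b) \gtrsim 2^{(b-1)/2}$, which falls short of $2[b-1]$ by an enormous margin once $b \geq 4$. To make your plan close you need $\log_2 g(b) \gtrsim (2[b-1])^2$, i.e.\ $g(b)$ itself must be at least a power tower of height roughly $b+1$; that simply cannot be extracted from the squaring recursion plus a constant base case, no matter how the constants are arranged, because squaring iterated from a constant never produces tower growth. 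Your parenthetical ``combining with the fact that $g$ values themselves count games, whose option sets can be chosen freely'' is precisely the missing ingredient --- one needs a recursion of a different shape, e.g.\ that distinct antichains of day-$b$ values yield distinct day-$(b+1)$ values, so that $g(b+1)$ is exponential in $a(b)$ rather than polynomial in $g(b)$ --- but you neither state nor prove such a lemma, and \cref{Lemma: Lower bound on g(b)} does not supply it. For what it is worth, the paper's own proof makes the same leap, asserting $g(b) \geq g(1)^{2[b-1]} = 2[b+1]$ ``by induction'' from \cref{Lemma: Lower bound on g(b)}; the inductive step there would require $2 \cdot 2[b-1] \geq 2^{2[b-1]}$, which already fails at $b = 3$. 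So the wall you ran into is real, and getting over it requires a genuinely stronger growth lemma for $g$ than the one you (and the paper) cite.
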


\begin{proof}
    Let $F(b) = n$. Then $G(n) \geq g(b)$ where $G(n)$ is the number of distinct game values $X$ equal to a \DiPlace game on at most $n$ vertices.  
    We now consider a lower bound on the value of $g(b)$.

    The literal forms born by day $1$ are $0 \cong \game{}{}$, $1 \cong \game{0}{}$,  $-1 \cong \game{}{0}$, and $\cgstar \cong \game{0}{0}$. No two of these games are equal, so $g(1) = 4$. By induction, for $b\geq 2$  \cref{Lemma: Lower bound on g(b)} implies that
    \begin{align*}
        g(b) & \geq g(1)^{2^{b-1}} \\
        & = 4^{2^{b-1}}\\
        & = 2^{2^b}.
    \end{align*}
    Thus $G(n) \geq 2^{2^b}$ and furthermore $\log_2(G(n)) \geq 2^b$. \cref{Lemma: bounds on the number of non-isomorphic digraphs on n vertices} implies that
    \begin{align*}
        \log_2(n) + n^2 & \geq 2^b.
    \end{align*}
    Hence,  $n \geq \lfloor 2^{b/2} \rfloor$, since $\lfloor \sqrt{\log_2(n)+n^2} \rfloor = n$
    for all integers $n \ge 1$.
\end{proof}

Having now established a lower bound for $F(b)$, we turn our attention to getting specific bounds for $b \le 5$. For $b\leq 2$ we obtain exact values of $F(b)$ in this paper. 
The authors, along with Davies, obtain exact values for $F(3)$ in \cite{clow2025constructing}.
Since the work for $b = 3$ was completed after the submission of this paper, but before acceptance, we will not refer to it or its findings
further in this paper.
For $b\geq 4$ there remains significant room for improvement in future work.

Our estimations of $F(b)$ for small $b$ use several methods. For $b\leq 2$ we construct games by hand for our upper bound and use combinatorial arguments to prove that these are tight. For $b\geq 3$ we obtain our upper bounds by bootstrapping using \cref{Lemma: F(b) upper bound in terms of b and a(b)} and computational results from the literature regarding $a(b)$ and $g(b)$. Hence, we include some tables of small values of $a(b)$ and $g(b)$. Our lower bounds for $3 \leq b\leq 5$ are trivial and giving significant improvements seems a hard problem.

\begin{lemma}[Chapter \MakeUppercase{\romannumeral 3}, Section 1 \cite{siegel2013combinatorial}, Lemma~3, Theorem~2 \cite{Suets2022b}, Theorem~11 \cite{wolfe2004counting}]
\label{Lemma: ag small b}
    For $b \leq 4$, $g(b)$ and $a(b)$ satisfy,
\begin{center}
\begin{tabular}{|c |c |c |} 
 \hline
  $b$ & $a(b)$ & $g(b)$ \\ [0.5ex] 
 \hline\hline
 $0$ & $1$ & $1$ \\ 
 \hline
 $1$ & $2$ & $4$ \\
 \hline
 $2$ & $4$ & $22$  \\
 \hline
 $3$ & $86$ & $1474$  \\
 \hline 
 $4$ & $ < 4\cdot 10^{184}$ & $ < 4 \cdot 10^{184}$  \\ 
 \hline
\end{tabular}
\end{center}
\end{lemma}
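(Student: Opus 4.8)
The plan is to assemble this table from the combinatorial game theory literature, proving the two smallest cases directly and quoting the rest. Throughout I will use that $g(b)$ counts the game values born by day $b$ and that $a(b)\le g(b)$ by containment, as already observed, so the antichain column never needs a bound better than the value column.

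For $b=0$ the only game value born by day $0$ is $0 = \game{}{}$, so $g(0)=a(0)=1$. For $b=1$ I will use the identification of the day-$1$ values as exactly $0$, $1$, $-1$, and $\cgstar$ (the same fact used in the proof of \cref{Theorem: F(b) lower bound in birthday}); these four are pairwise distinct, giving $g(1)=4$. Short playing proofs then give $-1<0<1$ and $-1<\cgstar<1$, while $\cgstar$ is next-player win and hence incomparable to $0$. Since $-1$ lies below, and $1$ above, every other day-$1$ value, the only antichain of size $2$ is $\{0,\cgstar\}$ and there is none of size $3$; thus $a(1)=2$.

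For $b\in\{2,3\}$ I will cite the classical enumerations $g(2)=22$ and $g(3)=1474$ from \cite[Chapter~III, Section~1]{siegel2013combinatorial}, and the maximum-antichain sizes $a(2)=4$ and $a(3)=86$ from \cite{Suets2022b} (with \cite{wolfe2004counting} as a further reference); these last two are obtained by a Dilworth-type computation inside the explicitly known finite posets of values born by days $2$ and $3$. For $b=4$ I will quote the estimate $g(4)\le 4\cdot 10^{184}$ on the number of values born by day $4$ from the same literature, and then $a(4)\le g(4)\le 4\cdot 10^{184}$ follows immediately from containment. I do not expect a genuine obstacle here: beyond the elementary $b\le 1$ checks, the only things that need care are transcribing the day-$4$ constant faithfully from its source and attributing the day-$3$ antichain number to the correct result, since that value rests on a computer-assisted poset computation rather than anything one would want to redo by hand.
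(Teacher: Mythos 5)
Your proposal is correct and matches the paper's treatment: the paper offers no proof of this lemma beyond the citations in its bracket, relying on the literature for $g(2)=22$, $g(3)=1474$, $a(2)=4$, $a(3)=86$, and the day-$4$ bound, together with the containment $a(b)\le g(b)$ that it records just before the lemma. Your additional hands-on verification of the $b\le 1$ rows (including the antichain $\{0,\cgstar\}$ giving $a(1)=2$) is sound but does not change the approach.
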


\begin{theorem}\label{Theorem: F(012345)}
For $b \leq 5$, $F(b)$ satisfies,
\begin{center}
\begin{tabular}{|c |c | c |} 
 \hline
  $b$ & $ \leq F(b) $ & $ F(b) \leq $  \\ [0.5ex] 
 \hline\hline
 $0$ & $0$ & $0$ \\ 
 \hline
 $1$ & $2$ & $2$\\
 \hline
 $2$ & $4$ & $4$\\
 \hline
 $3$ & $5$ & $74$  \\
 \hline
 $4$ & $6$ & \num{13439} \\ 
 \hline
 $5$ & $7$ & $1.08 \cdot 10^{189}$\\
 \hline
\end{tabular}
\end{center}
\end{theorem}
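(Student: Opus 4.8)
The plan is to establish each row of the table by combining the general lower bound from \cref{Theorem: F(b) lower bound in birthday} with the recursive upper bound from \cref{Lemma: F(b) upper bound in terms of b and a(b)}, seeded by explicit hand constructions for small $b$. First I would handle the base cases. For $b=0$, the only value is $0$, which equals the empty digraph, so $F(0)=0$. For $b=1$, the values born by day $1$ are $0,1,-1,\cgstar$; by \cref{Lemma: Numbers are digraph games} we have single-vertex \DiPlace games equal to $1$ and $-1$, and the two-vertex game $H$ from the discussion before \cref{Lemma: Gadget Works Lemma} equals $\cgstar$, so $F(1)\le 2$. For the matching lower bound one argues that no value born on day $1$ other than $0$ is realized on fewer than $2$ vertices: a single-vertex \DiPlace game is $0$, $1$, or $-1$ (depending on the loop/color), and no game on $0$ or $1$ vertices equals $\cgstar$, so $F(1)=2$. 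For $b=2$, the lower bound $F(2)\ge 4$ comes from \cref{Theorem: F(b) lower bound in birthday} with $b=2$, since $2[1]=2$ — wait, that gives only $\ge 2$; so instead the lower bound $F(2)\ge 4$ must be obtained by a direct combinatorial argument (as the paper says, "combinatorial arguments"), e.g. by showing some specific day-$2$ value such as $\cgstar 2$ or $\cgup$ cannot be realized on $3$ vertices, using a finite case check over the $2^{3^2}$-or-fewer relevant small digraphs; the matching upper bound $F(2)\le 4$ comes from exhibiting explicit $4$-vertex \DiPlace games for every day-$2$ value (deferring the enumeration to \cref{Appendix:Day2}).

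Next I would handle $b=3,4,5$. The lower bounds are immediate from \cref{Theorem: F(b) lower bound in birthday}: $F(3)\ge 2[2]=4$, but the table claims $5$, so this case again needs a small ad hoc improvement (exhibiting one day-$3$ value requiring $5$ vertices, or observing $F$ is nondecreasing and there is a day-$3$ value strictly harder than all day-$2$ values); $F(4)\ge 2[3]=2^{2^2}=16$ and $F(5)\ge 2[4]=2^{16}=256$ follow directly. The upper bounds come from iterating \cref{Lemma: F(b) upper bound in terms of b and a(b)},
\[
F(b+1)\le 2a(b)\bigl(F(b)+b+1\bigr)+5b+8,
\]
starting from $F(2)=4$ and plugging in the values $a(2)=4$, $a(3)=86$, $a(4)\le 4\cdot 10^{184}$ from \cref{Lemma: ag small b}. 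Concretely: $F(3)\le 2\cdot 4\cdot(4+3)+5\cdot 2+8=74$; then $F(4)\le 2\cdot 86\cdot(74+4)+5\cdot 3+8=\num{13439}$; then $F(5)\le 2\cdot(4\cdot 10^{184})\cdot(\num{13439}+5)+5\cdot 4+8$, which one checks is at most $1.08\cdot 10^{189}$. Each of these is a one-line arithmetic verification.

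The only genuinely non-routine steps are the two lower bounds that exceed what \cref{Theorem: F(b) lower bound in birthday} delivers, namely $F(2)\ge 4$ and $F(3)\ge 5$ (and, to a lesser extent, pinning down $F(1)=2$). For these I would argue by exhaustion over small digraphs: there are at most $2^{n^2}$ \DiPlace games on $n$ vertices, and for $n\le 3$ this is a small enough finite set that one can (by hand, or by appeal to a short computation) list all game values realized and observe that a particular day-$2$ value (for $n\le 3$) and a particular day-$3$ value (for $n\le 4$) is missing; monotonicity $F(b)\le F(b+1)$ is not needed but $F$ restricted to "day exactly $b$" behaving well is, so I would phrase $\mathbb{G}_b$ as born-by-day-$b$ throughout, consistent with the paper's definition. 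I expect the main obstacle to be making the $F(3)\ge 5$ claim clean without an explicit computer search — the cleanest route is probably to name a concrete day-$3$ value (for instance one of $\pm(\cgstar 2)$, $\cgdoubleup$, or a small switch) and give a short parity/structural argument that $4$ vertices do not suffice, mirroring whatever argument is used for $F(2)\ge 4$.
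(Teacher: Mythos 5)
Your overall architecture matches the paper's: base cases by hand, upper bounds for $b=3,4,5$ by iterating \cref{Lemma: F(b) upper bound in terms of b and a(b)} with the values of $a(b)$ from \cref{Lemma: ag small b}, and lower bounds for $b=4,5$ from \cref{Theorem: F(b) lower bound in birthday}. Your arithmetic for the three recursive upper bounds ($74$, $\num{13439}$, $1.08\cdot 10^{189}$) is exactly the paper's computation and is correct. You also correctly diagnose that the two lower bounds $F(2)\geq 4$ and $F(3)\geq 5$ are the genuinely non-routine steps, since \cref{Theorem: F(b) lower bound in birthday} only yields $2$ and $4$ there. (One small slip: you write $2[4]=2^{16}=256$; in fact $2^{16}=\num{65536}$, so the theorem gives a stronger bound than the table's $256$ --- the table entry is simply the weaker consequence the paper chose to record.)

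The gap is that you never actually supply those two lower bounds: for $F(2)\geq 4$ you propose an unexecuted exhaustive search over all $2$-coloured digraphs on at most $3$ vertices, and for $F(3)\geq 5$ you only gesture at ``a short parity/structural argument'' for an unspecified day-$3$ value (your fallback ``$F$ is nondecreasing'' only gives $F(3)\geq F(2)=4$, not $5$). The paper's arguments are short and structural, and the second builds on the first, which is the idea you are missing. For $F(2)\geq4$: observe that a \DiPlace game with no blue vertices is a non-positive integer and one with no red vertices is a non-negative integer; now let $G=\game{1}{-1}$. If $G$ had at most one blue vertex, every Left option of $G$ would have no blue vertices, hence be an integer $\leq 0$; but Left moving first must have a winning option, which would then have to be the empty game, and an option to $0$ cannot survive in (or reverse to) the canonical form $\game{1}{-1}$, a contradiction. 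So $G$ needs two blue vertices, and symmetrically two red ones. For $F(3)\geq 5$: take any day-$3$ canonical form having $\game{1}{-1}$ as an option; realizing that option (directly or through a reversible option) already costs $4$ vertices, and the move that reaches it costs at least one more, so $5$ vertices are required. Without these two arguments the rows $b=2$ and $b=3$ of the table are unproved, and the $b=3$ row cannot be repaired by monotonicity alone.
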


\begin{proof}
    We divide the proof into cases depending on which value $b$ takes in $\{0,1,2,3,4,5\}$.

\vspace{0.5cm}
\noindent
\underline{Case.1:} $b = 0$.

The game $0 \cong \game{}{}$ is the unique game born on day $0$. As neither player has a move, the \DiPlace game played on an empty graph has value $0$. So $F(0) = 0$.

\vspace{0.5cm}
\noindent
\underline{Case.2:} $b = 1$.

The literal forms born by day $1$ are $0 \cong \game{}{}$, $1 \cong \game{0}{}$,  $-1 \cong \game{}{0}$, and $\cgstar \cong \game{0}{0}$. The games $1$ and $-1$
are equal to the \DiPlace game on a single blue vertex and a single red vertex respectfully. Meanwhile, $\cgstar$ is equal to the \DiPlace game with a single blue vertex $u$ and a single red vertex $v$, such that $(u,v),(u,v)$ are both edges. Hence, $F(1)\leq 2$.

To show that $F(1)\geq 2$, observe that $\cgstar$ has an option for both players. Hence, if $G$ is a \DiPlace game equal to $\cgstar$ there must be at least one blue vertex and at least one red vertex. Thus, $F(1)= 2$.

\vspace{0.5cm}
\noindent
\underline{Case.3:} $b = 2$.

By \cref{Lemma: ag small b}, $g(2) = 22$, and a full list of game values born by day $2$ is given in \cite{Suets2022b}. 
We construct all $22$ of these values born by day $2$ in  as \DiPlace games with at most $4$ vertices in \cref{Appendix:Day2}.
Hence, $F(2) \leq 4$.

Observe that if a \DiPlace game, say $H$, has no blue vertices, then $H$ is an integer and $H \le 0$ with equality holding exactly when $H$ is empty. Similarly, if $H$ has no red vertices, then $H$ is an integer and $H \ge 0$.

To see that $F(2) \geq 4$, consider the game $\game{1}{-1}$ and let $G$ be a \DiPlace game equal to $\game{1}{-1}$. 
Without loss of generality, consider Left moving first. If every Left option of $G$ has no blue vertices, then every option is a non-negative integer. Because Left wins $G$ going first, there must then be an option to the empty game. The option to the empty game is not reversible; hence $G \neq \game{1}{-1}$, a contradiction. That is, Left has an option to a game with a blue vertex and thus must itself have two blue vertices. Since Left was chosen without loss of generality, there must also be two red vertices. Thus, $F(2) = 4$.

\vspace{0.5cm}
\noindent
\underline{Case.4:} $b = 3$.

Observe that $F(3) \leq 74$ follows from \cref{Lemma: F(b) upper bound in terms of b and a(b)} given $F(2) = 4$ by case.3 and \cref{Lemma: ag small b}. To see that $F(3) \geq 5$, notice that there exist game values born on day $3$ with an option to a game equal to $\game{1}{-1}$. Hence, any game that has $\game{1}{-1}$ as a follower requires at least $5$ vertices.

\vspace{0.5cm}
\noindent
\underline{Case.5:} $b = 4$.

Observe that $F(4) \leq \num{13439}$ follows from \cref{Lemma: F(b) upper bound in terms of b and a(b)} given $F(3) \leq 74$ by case.4 and \cref{Lemma: ag small b}. It is immediate that $F(4) \geq F(3)+1 \geq 6$. Thus, $6 \leq F(4) \leq \num{13439}$.

\vspace{0.5cm}
\noindent
\underline{Case.6:} $b = 5$.

Observe that $F(5) \leq 1.08 \cdot 10^{189}$ follows from \cref{Lemma: F(b) upper bound in terms of b and a(b)} given $F(4) \leq  \num{13439}$ by case.5 and \cref{Lemma: ag small b}. It is immediate that $F(5) \geq F(4)+1 \geq 7$. Thus, $7 \leq F(5) \leq 1.08 \cdot 10^{189}$.
\end{proof}

We now can and do give an upper bound for $F(b)$ that is not recursive, unlike \cref{Lemma: F(b) upper bound in terms of b and a(b)}.
This bound is not the best possible upper bound achievable by these methods. However, in the authors' opinion, any improvement made primarily using these methods would remain far from tight. Hence, we give the following bound that is relatively simple to state.

\begin{theorem}\label{Theorem: F(b) upper bound in birthday}
    For all $b\geq 1$, 
    \[
    F(b) < \frac{1}{2}g(b+1) - b.
    \]
\end{theorem}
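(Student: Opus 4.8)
The plan is to bound $F(b)$ by iterating the recursive inequality from \cref{Lemma: F(b) upper bound in terms of b and a(b)}, namely $F(b+1) \leq 2a(b)(F(b)+b+1) + 5b+8$, and then controlling the growth using the crude bound $a(b) \leq g(b)$ together with the superquadratic growth $g(b+1) \geq g(b)^2$ from \cref{Lemma: Lower bound on g(b)}. Since $g$ grows so fast, the dominant term in $2a(b)(F(b)+b+1)+5b+8$ is essentially $2g(b)F(b)$, and the additive/linear-in-$b$ terms are negligible once $g(b)$ is large. So I expect the argument to be a single induction on $b$ where one checks a clean base case (say $b=1$, where $g(2)=22$ and $F(1)=2$, so $\tfrac12 g(2) - 1 = 10 > 2$) and then shows that if $F(b) < \tfrac12 g(b+1) - b$, the recursion pushes $F(b+1)$ below $\tfrac12 g(b+2) - (b+1)$.

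First I would set up the inductive step: assume $F(b) < \tfrac12 g(b+1) - b$. Plugging into \cref{Lemma: F(b) upper bound in terms of b and a(b)} and using $a(b) \leq g(b)$ gives
\[
F(b+1) < 2g(b) \left( \tfrac12 g(b+1) - b + b + 1 \right) + 5b + 8 = g(b)\,g(b+1) + 2g(b) + 5b + 8.
\]
Now the key estimate is that $g(b)\,g(b+1) + 2g(b) + 5b + 8 < \tfrac12 g(b+2) - (b+1)$. Since $g(b+2) \geq g(b+1)^2 \geq g(b+1)\cdot g(b)^2 \geq 2 g(b)\, g(b+1)$ (using $g(b) \geq 2$ for $b \geq 1$, which follows from $g(1)=4$ and monotonicity), we get $\tfrac12 g(b+2) \geq g(b)\,g(b+1)$, so it remains to absorb the lower-order terms $2g(b) + 5b + 8 + (b+1)$ into the slack. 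The slack here is $\tfrac12 g(b+2) - g(b)\,g(b+1) \geq \tfrac12 g(b+2) - \tfrac12 g(b+2) = 0$ in the worst case, so I would need a slightly sharper bound on $g(b+2)$: in fact $g(b+2) \geq g(b+1)^2 \geq (g(b)^2)^2 = g(b)^4$ when combined with $g(b+1) \ge g(b)^2$, or more simply use $g(b+2) \ge g(b+1)^2$ and $g(b+1) \ge g(b)^2 \ge 4 g(b)$ for $b \ge 1$ (since $g(b) \ge 4$), giving $\tfrac12 g(b+2) \ge \tfrac12 g(b+1)^2 \ge \tfrac12 \cdot 4g(b) \cdot g(b+1) = 2 g(b) g(b+1)$, which leaves a full extra copy of $g(b)g(b+1)$ as slack — vastly more than enough to swallow $2g(b) + 6b + 9$.

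The main obstacle is purely bookkeeping: making sure the base case and the "$g(b) \geq 4$ for $b\ge 1$" fact are in place so that the lower-order terms genuinely disappear, and checking that the chain of inequalities on $g$ is valid for all $b \geq 1$ (not just asymptotically). I do not anticipate any conceptual difficulty — the theorem is deliberately a loose, easy-to-state consequence of the recursion, as the remark preceding it admits. If the slack computation above turns out to be tight at small $b$, the fallback is to verify $b = 1, 2$ by hand using the table in \cref{Lemma: ag small b} ($g(2) = 22$, $g(3) = 1474$, $F(2) = 4$, $F(3) \le 74$) and run the induction from $b = 3$ onward, where $g$ is already large enough that the estimates are comfortable.
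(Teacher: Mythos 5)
Your proposal is correct and follows essentially the same route as the paper: induct on $b$ using the recursion $F(b+1)\leq 2a(b)(F(b)+b+1)+5b+8$, bound $a(b)\leq g(b)$, and absorb the lower-order terms using the superquadratic growth $g(b+1)\geq g(b)^2$. The only differences are bookkeeping (the paper starts the induction at $b=3$ after checking small cases from its tables, and rearranges the key inequality as $g(b)g(b+2)-2g(b+2)-2(6b+9)g(b)>0$ rather than your $\tfrac12 g(b+2)\geq 2g(b)g(b+1)$), and in fact you are slightly more careful in retaining the $2g(b)$ term that the paper's displayed chain silently drops.
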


\begin{proof}
The result holds for $1 \leq b \leq 3$ by \cref{Lemma: ag small b} and by \cref{Theorem: F(012345)}.
Suppose that the result holds for $F(b)$ and consider $F(b+1)$ where $b\geq 3$.
Applying \cref{Lemma: F(b) upper bound in terms of b and a(b)} and the induction hypothesis, we see that 
\begin{align*}
    F(b+1) &\leq 2a(b)(F(b)+b+1) + 5b + 8 \\
    &\leq 2g(b)(F(b)+b+1) + 5b + 8. \\
    &\leq g(b)g(b+1)+ 2g(b)+ 5b+ 8. 
\end{align*}
Using \cref{Lemma: Lower bound on g(b)} twice gives $g(b+2) \geq g(b)^2 g(b+1)$. This implies that
\begin{align*}
    F(b+1) &\leq \frac{g(b+2)}{g(b)} + 2g(b) + 5b + 8.
\end{align*}
We claim that $\frac{g(b+2)}{g(b)} + 2g(bS) + 5b + 8 < \frac{g(b+2)}{2} - (b+1)$. To show this, we consider the difference
\[
\frac{g(b+2)}{2} - b - 1 - \left(\frac{g(b+2)}{g(b)} + 2g(b)+ 5b + 8\right).
\]
By \cref{Lemma: Lower bound on g(b)}, for $b\geq 2$,
\begin{align*}
        g(b) & \geq g(1)^{2^{b-1}} \\
        & = 4^{2^{b-1}}\\
        & = 2^{2^b}.
\end{align*}
For all $b \geq 3$, $g(b) \geq 2^{2^b} > 6b + 9$;
hence  
\[
\frac{g(b+2)}{2} - \left(\frac{g(b+2)}{g(b)} + 2g(b) + 6b + 9\right) > 0
\]
if and only if 
$$
g(b)g(b+2) - 2g(b+2) - 2(2g(b)+6b+9)g(b) >0.
$$

From here it is not hard to verify that
\begin{align*}
    g(b)g(b+2) - 2g(b+2) - 2(2g(b)+6b+9)g(b) & = (g(b)-2)g(b+2) - 2(2g(b)+6b+9)g(b) \\
    & \geq (g(b)-2)g(b+2) - 4g(b)^2 + 2(6b+9)g(b)  \\
    & \geq (g(b)-2)g(b+2) - 6g(b+1)\\
    & > 6g(b+2) - 6g(b+1) \\
    & = 6\Bigl( g(b+2) - g(b+1) \Bigr)\\
    &> 0.
\end{align*}
For all $b\geq 3$, $g(b) > 8$ and $g(b)$ is a strictly increasing monotone function. Thus, 
\begin{align*}
    F(b+1) &\leq \frac{g(b+2)}{g(b)} + 5b+9 \\
    & < \frac{g(b+2)}{2} - b
\end{align*}
as desired. This concludes the proof.
\end{proof}

\section{Future Work}\label{Section:Future}

Here we discuss some questions for future work. The first and perhaps most obvious of these is the following.
However, between submission and acceptance of this paper the authors and Davies \cite{clow2025constructing} resolved this problem 
by showing $F(3)=8$.

\begin{problem}
    Determine the value of $F(3)$.
\end{problem}

One can also ask how $F(b)$ grows asymptotically. 
From the limited data provided by small examples it seems that that $F(b)$ grows slowly compared to $g(b)$.
This leads into our first conjecture. 

\begin{conjecture}
    $F(b) = o(g(b))$.
\end{conjecture}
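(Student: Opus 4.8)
The plan is to establish the sharper bound $F(b)=o(g(b))$ by unfolding the recursion of \cref{Lemma: F(b) upper bound in terms of b and a(b)} more carefully than in \cref{Theorem: F(b) upper bound in birthday}. Since $F(b)\geq 2[b-1]$ by \cref{Theorem: F(b) lower bound in birthday}, the additive $O(b)$ terms in that recursion are negligible once $b$ is large, so $F(b+1)\leq C_0\,a(b)\,F(b)$ for an absolute constant $C_0$ and all $b\geq b_0$. Iterating and using $a(i)\leq g(i)$ gives
\[
F(b)\;\leq\; C_0^{\,b}\,C_1\prod_{i=0}^{b-1}g(i)
\]
for a constant $C_1$, and an easy induction from $g(i)\geq g(i-1)^2$ shows $\prod_{i=0}^{b-1}g(i)\leq g(b)$, which merely recovers $F(b)\leq C_0^{\,b}C_1\,g(b)$. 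The stray factor $C_0^{\,b}$ is exactly what stands between us and the conjecture, and it is no accident: the bound $g(b+1)\geq g(b)^2$ of \cref{Lemma: Lower bound on g(b)} is precisely borderline for this argument.

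To kill the factor $C_0^{\,b}$ I would prove the quantitative strengthening of \cref{Lemma: Lower bound on g(b)} that
\[
q(b)\;:=\;\frac{g(b)}{g(b-1)^2}\;\longrightarrow\;\infty\qquad(b\to\infty)
\]
(indeed $q(b)\geq C_0+1$ eventually would suffice). Writing $\rho(b)=g(b)\big/\prod_{i=0}^{b-1}g(i)$ one has the telescoping identity $\rho(b+1)=q(b+1)\,\rho(b)$, so $q(b)\to\infty$ forces $\rho(b)=\omega(C_0^{\,b})$, i.e. $\prod_{i=0}^{b-1}g(i)=o\!\big(g(b)/C_0^{\,b}\big)$. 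Substituting into the displayed bound gives $F(b)=o(g(b))$, proving the conjecture; in fact it yields the far stronger $F(b)=g(b)/\omega(1)$ with a tower-rate $\omega(1)$.

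Thus the entire difficulty is concentrated in the growth estimate $q(b)\to\infty$, and I expect this to be the main obstacle. Morally it should hold with enormous room --- the number of game values born by day $b$ grows at a tower rate, and one expects $g(b+1)\geq 2^{\Omega(g(b))}$, which dwarfs $g(b)^2$ --- but the only estimate available from the excerpt is Wolfe's $g(b+1)\geq g(b)^2$, and the small computed data ($q(2)=22/16<2$ and $q(3)=1474/22^2\approx 3$) shows that any usable improvement must be asymptotic rather than uniform in $b$. One concrete route is to exhibit, for each large $b$, a family of $2^{\Omega(g(b-1))}$ pairwise inequivalent games of birthday at most $b$ --- for instance by letting the Left and Right option sets range over antichains of day-$(b-1)$ values while controlling reversibility --- which would give $\log_2 g(b)=\Omega(g(b-1))$ and hence $q(b)\to\infty$.

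Two alternative lines are worth recording. First, the numerics $a(3)/g(3)\approx 0.06$ suggest that $a(b)$ is already substantially smaller than $g(b)$; if one could show $a(b)\leq g(b)^{1-\varepsilon}$ (or even $\log a(b)=o(\log g(b))$) for large $b$, then the unfolding above closes with room to spare and no strengthening of \cref{Lemma: Lower bound on g(b)} is needed. Second, and more ambitiously, one could improve the construction itself rather than its analysis: the gadget of \cref{Lemma: Gadget Works Lemma} pays for a fresh realisation of every non-dominated option of $X$, yet options are shared heavily among the games of a given birthday, so a construction reusing a common sub-digraph that simultaneously encodes all day-$(b-1)$ values might replace the recursion by something like $F(b+1)\leq F(b)+\operatorname{poly}(b,a(b))$, which would settle the conjecture and almost certainly show that $F(b)$ is astronomically smaller than $g(b)$. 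Of these, I would pursue the growth estimate $q(b)\to\infty$ first, as it is the cleanest and is the single place where the real work lies.
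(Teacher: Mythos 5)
There is no proof to compare against here: the statement is stated in the paper as a \emph{conjecture}, and the authors explicitly leave it open, remarking only that a proof would require either a new method of bounding $F(b)$ from above or a proof that $a(b)F(b)=o(g(b))$. Your proposal is likewise not a proof. The unconditional part of your argument is sound --- unrolling \cref{Lemma: F(b) upper bound in terms of b and a(b)} with $a(i)\leq g(i)$ and telescoping $g(i)\geq g(i-1)^2$ does give $F(b)\leq C_0^{\,b}C_1\,g(b)$, and your telescoping identity $\rho(b+1)=q(b+1)\rho(b)$ is correct --- but the entire conclusion then rests on the claim $q(b)=g(b)/g(b-1)^2\to\infty$ (or at least $q(b)\geq C_0+1$ eventually), which you assert is ``morally'' true but do not establish. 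The only lower bound available, \cref{Lemma: Lower bound on g(b)}, is exactly $g(b)\geq g(b-1)^2$, i.e.\ $q(b)\geq 1$, which is precisely the borderline case where your argument yields nothing beyond $F(b)=O(C_0^{\,b}g(b))$; and as you note yourself, the computed values ($q(2)<2$) show that no uniform strengthening is available from the data in the paper. Your fallback routes --- proving $a(b)\leq g(b)^{1-\varepsilon}$, or redesigning the gadget of \cref{Lemma: Gadget Works Lemma} to share sub-digraphs across options --- are equally unproven, and the paper itself cautions that ``there is little to indicate that $a(b)F(b)=o(g(b))$.''

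To your credit, your diagnosis of where the difficulty sits coincides with the authors' own: everything reduces to a super-quadratic growth estimate for $g$ or a genuinely new construction. But a reduction of one open statement to another open statement is not a proof, and you should present it as a research plan rather than as an argument that ``proves the conjecture,'' which your second paragraph currently claims. If you want to make progress on the concrete sub-goal, the route via exhibiting $2^{\Omega(g(b-1))}$ pairwise inequivalent values born by day $b$ is the right target, but it requires controlling both domination and reversibility among the candidate option sets, and none of that control is supplied by anything in this paper.
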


\cref{Theorem: F(012345)} implies that for $b\leq 4$, $F(b) < g(b)$. Moreover, we notice that the gap between $F(b)$ and $g(b)$ is expanding rapidly for $b\leq 4$. Hence, it seems reasonable to assume, given the nature of how fast these functions are growing, that these early gaps will further expand as $b$ tends to infinity, hence this conjecture. To prove this result, one would need a new approach to bounding $F(b)$ above, or, using \cref{Lemma: F(b) upper bound in terms of b and a(b)},  to prove that $a(b)F(b) = o(g(b))$. We observe that there is little to indicate that $a(b)F(b) = o(g(b))$, although the possibility of this cannot be discounted.

Another approach to understanding the construction of game values in \DiPlace is to maximize $f(X)$ over smaller sets of games $X$. What values $X$ are the hardest to make in \DiPlace? One way to formalize this is as \cref{fut:hardtofind}.

\begin{problem}\label{fut:hardtofind}
    Find a set of game values $S$, such that there exists a constant $c>0$, where if $S_b$ is the set of all games in $S$ born by day $b$,
    \[
    \min_{X\in S_b} f(X) \geq cF(b)
    \]
    for all sufficiently large $b$.
\end{problem}

Alternatively, which game values $X$ born by day $b$ require much fewer than $F(b)$ vertices to make? Numbers, switches, uptimals and other notable classes of games are all candidates. We note that every nimber $\nimber{n}$ can be constructed with $2n$ vertices through the use of ordinal sums. We conjecture that this is best possible.

\begin{conjecture}
Let $n\geq1$ be an integer. Then,
\[
f(\nimber{n})=2n.
\]
\end{conjecture}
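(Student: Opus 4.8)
The plan is to treat the two bounds $f(\nimber{n})\le 2n$ and $f(\nimber{n})\ge 2n$ separately; the first is routine, the second carries the content. For the upper bound I would realise $\nimber{n}$ explicitly by a \emph{nimber stalk}: take vertices $b_1,r_1,\dots,b_n,r_n$ with each $b_i$ blue and each $r_i$ red, put a bidirected edge between $b_i$ and $r_i$ for every $i$, and a directed edge from each of $b_i,r_i$ to each of $b_j,r_j$ whenever $j>i$. Then $N^+[b_i]=N^+[r_i]=\{b_i,r_i\}\cup\{b_j,r_j:j>i\}$, so a move on $b_i$ or on $r_i$ leaves the induced stalk on layers $1,\dots,i-1$. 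By induction on the number of layers this subgraph equals $\nimber{(i-1)}$, so both players' option sets of the $n$-layer stalk are $\{\nimber{0},\dots,\nimber{(n-1)}\}$, whence the stalk equals $\{\nimber{0},\dots,\nimber{(n-1)}\mid\nimber{0},\dots,\nimber{(n-1)}\}=\nimber{n}$. This uses $2n$ vertices (it is just the \DiPlace implementation of the ordinal sum $\nimber{}:\dots:\nimber{}$), giving $f(\nimber{n})\le 2n$.

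For the lower bound I would prove the stronger assertion $Q(n)$: every \DiPlace game $G$ with $G=\nimber{n}$ has at least $n$ blue vertices and at least $n$ red vertices, which gives $f(\nimber{n})\ge 2n$. I would induct on $n$; $Q(0)$ and $Q(1)$ are immediate since $\nimber{}$ is an $\mathcal{N}$-position and so both players must have a move. The engine of the step is the claim that $G=\nimber{n}$ has a Left option $G^{L}$ with $\nimber{(n-1)}\le G^{L}$ and $G^{L}\not\ge\nimber{n}$, together with the colour-dual statement (apply this to $-G$, using $-\nimber{n}=\nimber{n}$) giving a Right option $G^{R}$ with $G^{R}\le\nimber{(n-1)}$ and $G^{R}\not\le\nimber{n}$. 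Granting this, $G^{L}$ is itself a \DiPlace game, Left's move on a blue vertex to reach $G^{L}$ deletes at least one blue vertex absent from $G^{L}$, and symmetrically for $G^{R}$; so if one can show that any such $G^{L}$ has at least $n-1$ blue vertices and any such $G^{R}$ at least $n-1$ red vertices, the induction closes.

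To prove the engine, perturb. Since $\nimber{n}+\nimber{(n-1)}=\nimber{(n\oplus(n-1))}$ is a nonzero nimber, $G+\nimber{(n-1)}$ is an $\mathcal{N}$-position, so Left has a winning first move, reaching a position $\ge 0$. If that move were in the $\nimber{(n-1)}$ summand it would reach $G+\nimber{j}\ge 0$ with $j\le n-2$, forcing $\nimber{n}=G\ge\nimber{j}$, which is false for $j\ne n$. Hence the move is in $G$, so $G^{L}+\nimber{(n-1)}\ge 0$, i.e.\ $G^{L}\ge\nimber{(n-1)}$. On the other hand $G+\nimber{n}=0$, so it is a $\mathcal{P}$-position, Left loses moving first there, and in particular every Left move reaches a position $\not\ge 0$; that is, every Left option $G^{L}$ of $G$ satisfies $G^{L}+\nimber{n}\not\ge 0$, i.e.\ $G^{L}\not\ge\nimber{n}$. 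This proves the engine, and the dual argument gives the statement for $G^{R}$.

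The hard part is the last step: passing from $\nimber{(n-1)}\le G^{L}\not\ge\nimber{n}$ to a lower bound on the number of blue vertices of $G^{L}$. The obvious proxy — the length of the longest run of consecutive Left moves from $G^{L}$, which is at most its number of blue vertices — is too weak, because $G^{L}$ need not have value exactly $\nimber{(n-1)}$: already for $n=3$ one may have $G^{L}=\nimber{2}+\cgup$, whose canonical form has only the single Left move to $0$, so its longest Left run has length $1<n-1$. In such a case the blue vertices are instead forced by a \DiPlace realisation of a larger nimber occurring as a subgame under a Right move of $G^{L}$, so the bound should still hold; but capturing this appears to need a stronger induction hypothesis — one phrased for a class of near-nimber values closed under the option step above and excluding cheap positive games such as integers — and making that hypothesis survive the ``move in the nimber summand'' case is precisely where the argument does not yet close. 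This, I expect, is the real obstacle, and is presumably why the statement is posed as a conjecture; by contrast the upper bound and the extraction of the good option are routine.
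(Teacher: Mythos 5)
First, a point of context: the statement you are proving is posed in the paper as a \emph{conjecture}. The authors only remark that every $\nimber{n}$ can be constructed on $2n$ vertices via ordinal sums and conjecture that this is best possible, so there is no proof in the paper to compare against. Your upper bound is exactly the construction the authors have in mind: your $2n$-vertex stalk is the \DiPlace realisation of $\nimber{}\os\nimber{}\os\cdots\os\nimber{}$, its layer-$i$ moves leave the $(i-1)$-layer stalk, and the induction giving value $\nimber{n}$ is correct. So $f(\nimber{n})\le 2n$ is fine.

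For the lower bound, your ``engine'' is sound: since $\nimber{n}+\nimber{(n-1)}$ is a nonzero nimber while $\nimber{n}+\nimber{n}=0$, the game $G=\nimber{n}$ does have a Left option $G^{L}$ with $G^{L}\ge\nimber{(n-1)}$ and $G^{L}\not\ge\nimber{n}$, and dually for Right. But, as you yourself identify, the induction does not close. Your hypothesis $Q(n-1)$ quantifies over \DiPlace games \emph{equal to} $\nimber{(n-1)}$, whereas $G^{L}$ is only known to lie in the much larger class of values that are $\ge\nimber{(n-1)}$ and confused with $\nimber{n}$; nothing established so far gives a vertex lower bound for \DiPlace realisations of an arbitrary member of that class. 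Your example $G^{L}=\nimber{2}+\cgup$ shows the obvious proxy (the length of a maximal run of consecutive Left moves) can collapse to $1$, so one really does need a strengthened induction hypothesis for a class of near-nimber values closed under the option-extraction step --- and that is precisely the open content of the conjecture. In short: the proposal correctly proves $f(\nimber{n})\le 2n$, correctly extracts the good options, and correctly diagnoses why the matching lower bound remains out of reach; it does not resolve the conjecture, and neither does the paper.
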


A similar meta question is to ask which graph theoretic parameters of the underlying graph $G$ impact the winner and the value of a \DiPlace game $G$. 
There are too many such questions to list; we provide three examples.

\begin{question}
    Huntemann and Maciosowski \cite{huntemann2024SNORT} recently proved that the maximum degree of a graph $G$ and the temperature of the  \Snort game played on $G$ can be arbitrarily far apart. How do degree and temperature relate in \DiPlace games?
\end{question}

\begin{question}
Do there exist values $X$ such that $X$ is not equal to any connected \DiPlace game?
\end{question}

\begin{question}
    For which graph classes $\mathcal{G}$ can the winner of all \DiPlace games $G$ in $\mathcal{G}$ be solved in polynomial time?
\end{question}

Tools that would likely assist in answering the aforementioned problems are methods for helpfully simplifying a \DiPlace game $G$ while ensuring that the resulting \DiPlace game $H$ satisfies $G=H$. Do such operations exist?

In a similar vein one can ask, how do operations on graphs, which are natural from the perspective of graph theory, impact the values of \DiPlace games? The first author and Finbow conjectured in \cite{clow2021advances} that the value of a lexicographic product of a family of impartial \Poset can be written in closed form as a function of the values of its factors. 
For which families of \DiPlace games do useful properties of \Poset remain true?

\bibliographystyle{plain}
\bibliography{bib}

\appendix

\section{Minimal constructions of Values Born by Day 2}\label{Appendix:Day2}

All $22$ game values born by day $2$ are constructed as \DiPlace games. We use the minimum number of vertices required for each value. There are several values that require $4$ vertices.

\bigskip

\begin{center}
\begin{tikzpicture}[node distance={15mm}, thick, main/.style = {draw, circle,}] 

\node[fill=none] at (-2,0) (nodes) {$-2$~:};

\draw [line width=3.pt](-1,-1) -- (3,-1) -- (3,1) -- (-1,1) -- (-1,-1);

\node[rectangle,draw,inner sep=5][fill= red] (1) at (0,0) {}; 
\node[rectangle,draw,inner sep=5][fill= red] (2) at (2,0) {};

\node[fill=none] at (5,0) (nodes) {$2$~:};

\draw [line width=3.pt](6,-1) -- (10,-1) -- (10,1) -- (6,1) -- (6,-1);

\node[main][fill= blue] (1) at (7,0) {}; 
\node[main][fill= blue] (2) at (9,0) {}; 

\end{tikzpicture}
\end{center}

\vspace{0.75cm}
\begin{center}
\begin{tikzpicture}[node distance={15mm}, thick, main/.style = {draw, circle,}] 

\node[fill=none] at (-2,0) (nodes) {$-1$~:};
\draw [line width=3.pt](-1,-1) -- (1,-1) -- (1,1) -- (-1,1) -- (-1,-1);

\node[rectangle,draw,inner sep=5][fill= red] (1) at (0,0) {};

\node[fill=none] at (5,0) (nodes) {$1$~:};
\draw [line width=3.pt](6,-1) -- (8,-1) -- (8,1) -- (6,1) -- (6,-1);

\node[main][fill= blue] (1) at (7,0) {}; 

\end{tikzpicture}
\end{center}

\vspace{0.75cm}
\begin{center}
\begin{tikzpicture}[node distance={15mm}, thick, main/.style = {draw, circle,}] 

\node[fill=none] at (-2,1) (nodes) {$-1 + \cgstar$~:};
\draw [line width=3.pt](-1,-1) -- (3,-1) -- (3,3) -- (-1,3) -- (-1,-1);

\node[rectangle,draw,inner sep=5][fill= red] (1) at (0,0) {}; 

\node[main][fill= blue] (2) at (0,2) {}; 
\node[rectangle,draw,inner sep=5][fill= red] (3) at (2,1) {}; 

\draw [line width=1.pt] (1) -- (2);

\node[fill=none] at (5,1) (nodes) {$1 + \cgstar$~:};
\draw [line width=3.pt](6,-1) -- (10,-1) -- (10,3) -- (6,3) -- (6,-1);

\node[rectangle,draw,inner sep=5][fill= red] (1) at (7,0) {}; 

\node[main][fill= blue] (2) at (7,2) {}; 
\node[main][fill= blue] (3) at (9,1) {}; 

\draw [line width=1.pt] (1) -- (2);

\end{tikzpicture}
\end{center}

\vspace{0.75cm}
\begin{center}
\begin{tikzpicture}[node distance={15mm}, thick, main/.style = {draw, circle,}] 

\node[fill=none] at (-2.25,1) (nodes) {$-\frac{1}{2}$~:};
\draw [line width=3.pt](-1,-1) -- (3,-1) -- (3,3) -- (-1,3) -- (-1,-1);

\node[main][fill= blue] (1) at (0,1) {}; 

\node[rectangle,draw,inner sep=5][fill= red] (2) at (2,1) {}; 

\draw [line width=1.pt] [->] (2) -- (1);

\node[fill=none] at (4.75,1) (nodes) {$\frac{1}{2}$~:};
\draw [line width=3.pt](6,-1) -- (10,-1) -- (10,3) -- (6,3) -- (6,-1);

\node[rectangle,draw,inner sep=5][fill= red] (1) at (7,1) {}; 

\node[main][fill= blue] (2) at (9,1) {}; 

\draw [line width=1.pt] [->] (2) -- (1);
\end{tikzpicture}
\end{center}

\vspace{0.75cm}
\begin{center}
\begin{tikzpicture}[node distance={15mm}, thick, main/.style = {draw, circle,}] 

\node[fill=none] at (-2.5,1) (nodes) {$\game{\cgstar}{-1}$~:};
\draw [line width=3.pt](-1,-1) -- (3,-1) -- (3,3) -- (-1,3) -- (-1,-1);

\node[main][fill= blue] (1) at (0,0) {}; 
\node[rectangle,draw,inner sep=5][fill= red] (2) at (2,0) {}; 

\node[main][fill= blue] (3) at (2,2) {}; 
\node[rectangle,draw,inner sep=5][fill= red] (4) at (0,2) {}; 

\draw [line width=1.pt] (1) -- (4);
\draw [line width=1.pt] (2) -- (3);
\draw [line width=1.pt][->] (1) -- (3);
\draw [line width=1.pt][->] (2) -- (1);

\node[fill=none] at (4.5,1) (nodes) {$\game{1}{\cgstar}$~:};
\draw [line width=3.pt](6,-1) -- (10,-1) -- (10,3) -- (6,3) -- (6,-1);

\node[rectangle,draw,inner sep=5][fill= red] (1) at (7,0) {}; 
\node[main][fill= blue] (2) at (9,0) {}; 

\node[rectangle,draw,inner sep=5][fill= red] (3) at (9,2) {}; 
\node[main][fill= blue] (4) at (7,2) {}; 

\draw [line width=1.pt] (1) -- (4);
\draw [line width=1.pt] (2) -- (3);
\draw [line width=1.pt][->] (1) -- (3);
\draw [line width=1.pt][->] (2) -- (1);

\end{tikzpicture}
\end{center}

\vspace{0.75cm}
\begin{center}
\begin{tikzpicture}[node distance={15mm}, thick, main/.style = {draw, circle,}] 

\node[fill=none] at (-2.5,1) (nodes) {$\game{0}{-1}$~:};
\draw [line width=3.pt](-1,-1) -- (3,-1) -- (3,3) -- (-1,3) -- (-1,-1);

\node[rectangle,draw,inner sep=5][fill= red] (1) at (0,0) {}; 
\node[rectangle,draw,inner sep=5][fill= red] (2) at (2,0) {}; 
\node[main][fill= blue] (3) at (1,2) {};

\draw [line width=1.pt] (1) -- (3);
\draw [line width=1.pt] (2) -- (3);

\node[fill=none] at (4.5,1) (nodes) {$\game{1}{0}$~:};
\draw [line width=3.pt](6,-1) -- (10,-1) -- (10,3) -- (6,3) -- (6,-1);

\node[main][fill= blue] (1) at (7,0) {}; 
\node[main][fill= blue] (2) at (9,0) {}; 
\node[rectangle,draw,inner sep=5][fill= red] (3) at (8,2) {};

\draw [line width=1.pt] (1) -- (3);
\draw [line width=1.pt] (2) -- (3);

\end{tikzpicture}
\end{center}

\vspace{0.75cm}
\begin{center}
\begin{tikzpicture}[node distance={15mm}, thick, main/.style = {draw, circle,}] 

\node[fill=none] at (-3,1) (nodes) {$\cgdown$~:};
\draw [line width=3.pt](-1,-1) -- (3,-1) -- (3,3) -- (-1,3) -- (-1,-1);

\node[rectangle,draw,inner sep=5][fill= red] (1) at (0,0) {}; 
\node[main][fill= blue] (2) at (2,0) {}; 
\node[main][fill= blue] (3) at (2,2) {};
\node[rectangle,draw,inner sep=5][fill= red] (4) at (0,2) {};

\draw [line width=1.pt] (1) -- (2);
\draw [line width=1.pt][->] (1) -- (3);
\draw [line width=1.pt][->] (1) -- (4);
\draw [line width=1.pt] (3) -- (4);

\node[fill=none] at (4.5,1) (nodes) {$\cgup$~:};
\draw [line width=3.pt](6,-1) -- (10,-1) -- (10,3) -- (6,3) -- (6,-1);

\node[main][fill= blue] (1) at (7,0) {}; 
\node[rectangle,draw,inner sep=5][fill= red] (2) at (9,0) {}; 
\node[rectangle,draw,inner sep=5][fill= red] (3) at (9,2) {};
\node[main][fill= blue] (4) at (7,2) {};

\draw [line width=1.pt] (1) -- (2);
\draw [line width=1.pt][->] (1) -- (3);
\draw [line width=1.pt][->] (1) -- (4);
\draw [line width=1.pt] (3) -- (4);

\end{tikzpicture}
\end{center}

\vspace{0.75cm}
\begin{center}
\begin{tikzpicture}[node distance={15mm}, thick, main/.style = {draw, circle,}] 

\node[fill=none] at (-2.5,1) (nodes) {$\cgdown + \cgstar$~:};
\draw [line width=3.pt](-1,-1) -- (3,-1) -- (3,3) -- (-1,3) -- (-1,-1);

\node[main][fill= blue] (d1) at (0,0) {}; 
\node[rectangle,draw,inner sep=5][fill= red] (d2) at (2,0) {}; 
\node[rectangle,draw,inner sep=5][fill= red] (d3) at (1,2) {};

\draw [line width=1.pt] (d1) -- (d2);
\draw [line width=1.pt][->] (d1) -- (d3);
\draw [line width=1.pt][->] (d2) -- (d3);

\node[fill=none] at (4.5,1) (nodes) {$\cgup + \cgstar$~:};
\draw [line width=3.pt](6,-1) -- (10,-1) -- (10,3) -- (6,3) -- (6,-1);

\node[rectangle,draw,inner sep=5][fill= red] (d1) at (7,0) {}; 
\node[main][fill= blue] (d2) at (9,0) {}; 
\node[main][fill= blue] (d3) at (8,2) {};

\draw [line width=1.pt] (d1) -- (d2);
\draw [line width=1.pt][->] (d1) -- (d3);
\draw [line width=1.pt][->] (d2) -- (d3);

\end{tikzpicture}
\end{center}

\vspace{0.75cm}
\begin{center}
\begin{tikzpicture}[node distance={15mm}, thick, main/.style = {draw, circle,}] 

\node[fill=none] at (-2.5,1) (nodes) {$\game{0,\cgstar}{-1}$~:};
\draw [line width=3.pt](-1,-1) -- (3,-1) -- (3,3) -- (-1,3) -- (-1,-1);

\node[main][fill= blue] (1) at (0,0) {}; 
\node[rectangle,draw,inner sep=5][fill= red] (2) at (2,0) {}; 

\node[main][fill= blue] (3) at (2,2) {}; 
\node[rectangle,draw,inner sep=5][fill= red] (4) at (0,2) {}; 

\draw [line width=1.pt] (1) -- (4);
\draw [line width=1.pt] (2) -- (3);
\draw [line width=1.pt][->] (1) -- (3);
\draw [line width=1.pt] (2) -- (1);

\node[fill=none] at (4.5,1) (nodes) {$\game{1}{0,\cgstar}$~:};
\draw [line width=3.pt](6,-1) -- (10,-1) -- (10,3) -- (6,3) -- (6,-1);

\node[rectangle,draw,inner sep=5][fill= red] (1) at (7,0) {}; 
\node[main][fill= blue] (2) at (9,0) {}; 

\node[rectangle,draw,inner sep=5][fill= red] (3) at (9,2) {}; 
\node[main][fill= blue] (4) at (7,2) {}; 

\draw [line width=1.pt] (1) -- (4);
\draw [line width=1.pt] (2) -- (3);
\draw [line width=1.pt][->] (1) -- (3);
\draw [line width=1.pt] (2) -- (1);

\end{tikzpicture}
\end{center}

\vspace{0.75cm}
\begin{center}
\begin{tikzpicture}[node distance={15mm}, thick, main/.style = {draw, circle,}] 

\node[fill=none] at (-2.5,1) (nodes) {$0$~:};
\draw [line width=3.pt](-1,-1) -- (3,-1) -- (3,3) -- (-1,3) -- (-1,-1);

\node[fill=none] at (4.5,1) (nodes) {$\cgstar$~:};
\draw [line width=3.pt](6,-1) -- (10,-1) -- (10,3) -- (6,3) -- (6,-1);

\node[main][fill= blue] (1) at (8,2) {}; 
\node[rectangle,draw,inner sep=5][fill= red] (2) at (8,0) {}; 

\draw [line width=1.pt] (1) -- (2);
\end{tikzpicture}
\end{center}

\vspace{0.75cm}
\begin{center}
\begin{tikzpicture}[node distance={15mm}, thick, main/.style = {draw, circle,}] 

\node[fill=none] at (-2.5,1) (nodes) {$\nimber2$~:};
\draw [line width=3.pt](-1,-1) -- (3,-1) -- (3,3) -- (-1,3) -- (-1,-1);

\node[rectangle,draw,inner sep=5][fill= red] (1) at (0,0) {}; 
\node[main][fill= blue] (2) at (0,2) {}; 

\node[main][fill= blue] (3) at (2,0) {}; 
\node[rectangle,draw,inner sep=5][fill= red] (4) at (2,2) {}; 

\draw [line width=1.pt] (1) -- (2);
\draw [line width=1.pt] (3) -- (4);
\draw [line width=1.pt][->] (3) -- (1);
\draw [line width=1.pt][->] (3) -- (2);
\draw [line width=1.pt][->] (4) -- (1);
\draw [line width=1.pt][->] (4) -- (2);\node[fill=none] at (4.5,1) (nodes) {$\game{1}{-1}$~:};
\draw [line width=3.pt](6,-1) -- (10,-1) -- (10,3) -- (6,3) -- (6,-1);

\node[main][fill= blue] (1) at (7,0) {}; 
\node[main][fill= blue] (3) at (9,2) {}; 

\node[rectangle,draw,inner sep=5][fill= red] (2) at (9,0) {}; 
\node[rectangle,draw,inner sep=5][fill= red] (4) at (7,2) {}; 

\draw [line width=1.pt] (1) -- (2);
\draw [line width=1.pt] (2) -- (3);
\draw [line width=1.pt] (3) -- (4);
\draw [line width=1.pt] (4) -- (1);

\end{tikzpicture}
\end{center}

\end{document}